\crefname{equation}{}{}
\crefname{enumi}{}{}
\newtheorem{theorem}{Theorem}
\newtheorem{lemma}{Lemma}
\newtheorem{corollary}{Corollary}
\theoremstyle{definition}
\newtheorem{remark}{Remark}
\newtheorem{example}{Example}
\newtheorem{problem}{Open Problem}
\newcommand{\F}{\mathbb{F}}
\newcommand{\N}{\mathbb{N}}
\newcommand{\Z}{\mathbb{Z}}
\newcommand{\R}{\mathcal{R}}
\newcommand{\M}{\mathcal{M}}
\newcommand{\mC}{\mathcal{C}}
\newcommand{\abs}[1]{\left|#1\right|}
\newcommand{\floor}[1]{\left\lfloor#1\right\rfloor}
\newcommand{\one}{\mathbbm 1}
\DeclareMathOperator{\id}{\operatorname{id}}
\DeclareMathOperator{\ord}{\operatorname{ord}}
\DeclareMathOperator{\spn}{\operatorname{span}}
\DeclareMathOperator{\inv}{\operatorname{inv}}
\title{There are siblings of \texorpdfstring{$\chi$}{chi} which are permutations for \texorpdfstring{$n$}{n} even}
\author{Björn Kriepke and Gohar Kyureghyan}
\begin{document}
\maketitle

\begin{abstract}
Let $\one$  be the all-one vector and
$\odot$ denote the component-wise multiplication of two vectors in $\F_2^n$. We study the vector space
$\Gamma_n$ over $\F_2$ generated by the functions $\gamma_{2k}:\F_2^n \to \F_2^n, k\geq 0$,
where
$$
\gamma_{2k} = S^{2k}\odot(\one+S^{2k-1})\odot(\one+S^{2k-3})\odot\ldots\odot(\one+S)
$$
and
 $S:\F_2^n\to\F_2^n$ is the cyclic left shift function. The functions in $\Gamma_n$ are shift-invariant and the well known $\chi$ function used
 in several cryptographic primitives is contained in
 $\Gamma_n$. 
 For even $n$, we show that the 
 permutations from $\Gamma_n$ with respect to composition form an Abelian group, which is isomorphic to the
 unit group of the residue ring $\F_2[X]/(X^n +X^{n/2})$. This isomorphism yields an efficient theoretic and 
 algorithmic method for constructing and studying a rich family of shift-invariant permutations on $\F_2^n$
 which are natural generalizations of $\chi$.  
 To demonstrate it, we apply the obtained results
 to investigate the function $\gamma_0 +\gamma_2+\gamma_4$ on $\F_2^n$.

\end{abstract}

\section{Introduction}

Let $\F_2$ be the field with two elements. The function $\chi:\F_2^n\to \F_2^n$ mapping $x$ to $\chi(x)$ is defined by 
\begin{equation*}
    \chi(x)_i = x_i + x_{i+2}(1+x_{i+1}),
\end{equation*}
where  $\chi(x)_i$ denotes the $i$th coordinate function of $\chi$ 
for $1\leq i \leq n$. The computations of indexes are done modulo $n$. 
The function $\chi$ is used in several important cryptographic primitives, for example SHA-3 \cite{nistSHA3StandardPermutationBased2015} and Ascon \cite{dobraunig2021ascon}. Mathematical properties of $\chi$ are studied in \cite{graner2025bijectivity,liu2022inverse,liu2025finding,schoone2024algebraic,schoone2024state}.  The function $\chi$ commutes with the left and right shift functions. Such functions are called shift invariant and can be effectively
implemented in practice. Shift invariant functions are closely related with cellular automata (CA) \cite{kari2005theory}, they are one-dimensional CA on periodic configurations. \\

The shift invariant permutation on $\F_2^n$
are of particular interest in cryptological applications.
It is well-known that $\chi$ is a permutation if and only if $n$ is odd (see for example \cite{daemen1995cipher}). However, in many
of designs of block ciphers, the block length is even and 
therefore constructions of permutations with good cryptographic properties required for $n$ even.
In \cite{leo-eurocrypt-2025-35126}, a permutation for $n=2m$ with $m$ even is constructed via concatenation of two $\chi$ functions and the addition of a suitable linear function. This method was extended to cover any even $n$ in a recent paper \cite{andreoli2025generalizations}. However, the resulting permutations are not shift-invariant. \\

In our recent paper \cite{kriepke2024algebraic}, it is observed that for an odd $n$ a number of properties of $\chi$
can be explained  by studying
the binary polynomial $1+X$ modulo $X^{(n+1)/2}$. 
For example, its order, its cycle structure, the explicit form of its iterates and its inverse function can be effectively derived  via this correspondence. 
We outline briefly the main ideas  and results of \cite{kriepke2024algebraic} below. \\

Let $\one$  denote the vector $(1,\ldots,1)\in\F_2^n$ and  $S:\F_2^n\to\F_2^n$ be the cyclic left shift function, i.e., 
$$S(x_1,\ldots, x_n)=(x_2,\ldots, x_n, x_1).$$
With the symbol $\odot$ we denote the component-wise multiplication of two vectors $x,y\in\F_2^n$. More precisely, $z=x\odot y$ denotes the vector with the $i$th component $z_i=x_i\cdot y_i$ for all $1\leq i \leq n$. Note that $S$ is linear. Furthermore, $S(x\odot y)=S(x)\odot S(y)$. The operation $\odot$ is commutative and distributive with respect to the addition, that is, $x\odot y=y\odot x$ and $x\odot(y+z)=x\odot y+x\odot z$.
Define $\gamma_0=\id$ and $\gamma_{2k}:\F_2^n\to\F_2^n$ for $k\geq 1$ by 
\begin{equation*}
    \gamma_{2k} := S^{2k}\odot(\one+S^{2k-1})\odot(\one+S^{2k-3})\odot\ldots\odot(\one+S).
\end{equation*}
The function $\gamma_{2k}$ is shift-invariant, since
$\gamma_{2k} \circ S = S\circ \gamma_{2k}$. Note that $\chi = \gamma_0 + \gamma_2$.
\\

For $n$ odd, it is shown in \cite{kriepke2024algebraic}, that the iterates of the function $\chi$
are linear combinations of functions $\gamma_{2k}$ over $\F_2$. This motivated the study of the linear span $\Gamma_n $ of 
the functions $\gamma_{2k}$ over $\F_2$. If $n$ is odd,  then $\Gamma_n$ has dimension $(n+1)/2$, since $\gamma_{2k}=0$ for every $k$ with  $2k > n$ and
the functions  $\gamma_0, \gamma_2, \gamma_4,\ldots, \gamma_{n-1}$ are linearly independent 
over $\F_2$. 
Since $\gamma_{2k}(0)=\gamma_{2k}(\one)=0$ for every $k\geq 1$, it follows that every permutation in $\Gamma_n$ is contained in $G_n=\gamma_0+\spn\{\gamma_2,\ldots,\gamma_{n-1}\}$. 
Notably, it was shown in \cite{kriepke2024algebraic} that $G_n$ is a monoid with respect to composition, which is isomorphic to 
\begin{equation*}
    \M_n = \left\{\left[1+\sum_{i=1}^ta_i X^i\right]\right\}\subseteq \F_2[X]/(X^{(n+1)/2}),
\end{equation*}
that is  to the unit group of the residue ring $\F_2[X]/(X^{(n+1)/2})$. From the fact that $(\M_n,\cdot)$ is an Abelian group it follows that $(G_n,\circ)$ is also an Abelian group. Hence, $G_n$ consists of permutations which commute. 
This isomorphism allows to construct, both theoretically and algorithmically, a rich
family of shift-invariant permutations. Further, it reduces
the study of specific properties of these permutations to that of the corresponding binary polynomials. 
To the best of our knowledge, the previously known families of shift-invariant
permutations are sporadic, in the sense that they are described by a specific algebraic formula for suitable 
infinite set of $n$. \\

The ideas of \cite{kriepke2024algebraic} are adapted in \cite{lyu2025generalized} to construct $\chi$-like permutations which are called $\chi_{n,m}$. If $m=2$ then $\chi_{n,2}$ coincides with $\chi$. Using similar steps as \cite{kriepke2024algebraic}, they show that $\chi_{n,m}$ is contained in an Abelian group which is isomorphic to the unit group of $\F_2[X]/(X^{\floor{n/m}+1})$ if $m$ does not divide $n$. \\

In this paper we generalize the ideas of \cite{kriepke2024algebraic} to study the linear span $\Gamma_n$ of the functions $\gamma_{2k}$ when $n$ is even. We show that $\Gamma_n$ is an $n$-dimensional vector space. 
Similarly to the $n$ odd case, all permutations in $\Gamma_n$ are contained in $G_n=\gamma_0+\spn\{\gamma_{2i}: i\geq 1\}$. 
However, in contrast to the $n$ odd case, $G_n$ also contains non-bijective functions if $n$ is even. 
We show that the subset of bijections in $\Gamma_n$ is an Abelian group which is isomorphic to the unit group of the ring $\F_2[X]/(X^n+X^{n/2})$. 
We expect that analogously to the case $n$ odd, the ideas of this paper can be adapted to cover the remaining case $m\mid n$ for the
functions studied in \cite{lyu2025generalized}. \\

The study of units in $\F_2[X]/(X^n+ X^{n/2})$ is closely related to the factorization 
of the polynomial $X^{n/2}+1$ over $\F_2$, which is a
challenging classical problem in algebra and number theory.
This explains why the behavior of permutation from $\Gamma_n$ is not as consistent as for $n$ odd. 
For $n$ odd the corresponding residue ring is modulo $X^{(n+1)/2}$, which has the unique irreducible factor $X$, independently of $n$. 
For $n$ even, the factorization of $X^{n/2} +1$ depends on $n$ and in general is not easy to write down \cite{graner2024irreducible}.
In spite of this, similarly to the $n$ odd case, the isomorphism of the set of permutations in $\Gamma_n$
with the unit group of $\F_2[X]/(X^n+ X^{n/2})$
yields an effective and powerful method for
constructing and studying a rich family of shift-invariant 
permutations, which are natural generalizations of $\chi$. 
We demonstrate our methods by applying them to study the map $\kappa=\gamma_0+\gamma_2+\gamma_4$ which is the simplest function in $G_n$ which can be a permutation for $n$ even. \\

\section{\texorpdfstring{$\chi$}{chi}-like shift-invariant permutations for \texorpdfstring{$n$}{n} even}\label{sect:compositions_gammas}

 The next example demonstrates  that the  behavior of the set of functions $\gamma_{2k}$ on $\F_2^n$  depends strongly on the parity of $n$.

\begin{example}
    First consider an odd $n$. We take $n=7$. Then the first few functions $\gamma_{2k}$ are given by
    \begin{align*}
        \gamma_0    &= \id \\
        \gamma_2    &=  S^2    \odot (\one + S) \\
        \gamma_4    &= S^4    \odot (\one + S^3) \odot (\one + S) \\
        \gamma_6    &= S^6    \odot (\one + S^5) \odot (\one + S^3) \odot (\one + S) \\
        \gamma_8    &= S^8    \odot (\one + S^7) \odot (\one + S^5) \odot (\one + S^3) \odot (\one + S).
    \end{align*}
    Since $S^8=S^1$ we get the product $S^1\odot (\one+S)=0$ in the last line, implying that $\gamma_8=0$. Moreover, all functions $\gamma_0,\ldots,\gamma_6$ have distinct algebraic degree. Recall,
    that the algebraic degree of a function $g:\F_2^n \to \F_2^n$ is defined as the maximum over the multivariate degrees of 
    its component functions of $g(x)_i, 1\leq i \leq n$. 
    In \cite{kriepke2024algebraic} it is shown,  that $\gamma_{2k}=0$ for any $2k\geq n+1$ and that $\gamma_{2k}$ has algebraic degree $k+1$ for $2k<n+1$.

    For $n$ even this is not the case. Let $n=6$. Then the first  functions $\gamma_{2k}$ are:
    \begin{align*}
        \gamma_0    &= S^0=\id \\
        \gamma_2    &= S^2    \odot (\one + S) \\
        \gamma_4    &= S^4    \odot (\one + S^3) \odot (\one + S) \\
        \gamma_6    &= S^6    \odot (\one + S^5) \odot (\one + S^3) \odot (\one + S) \\
        \gamma_8    &= S^8    \odot (\one + S^7) \odot (\one + S^5) \odot (\one + S^3) \odot (\one + S)  \\
        \gamma_{10} &= S^{10} \odot (\one + S^9) \odot (\one + S^7) \odot (\one + S^5) \odot (\one + S^3) \odot (\one + S)  \\
        \gamma_{12} &= S^{12} \odot (\one + S^{11})\odot (\one + S^9) \odot (\one + S^7) \odot (\one + S^5) \odot (\one + S^3)
				\odot (\one + S)\\
				\gamma_{14} &= S^{14} \odot (\one + S^{13})\odot (\one + S^{11})\odot (\one + S^9) \odot (\one + S^7) \odot (\one + S^5) \odot (\one + S^3)
				\odot (\one + S)
    \end{align*}
    Using $S^6=\id =S^0$ and that $y\odot y=y$ for all $y\in\F_2^n$, we get 
    \begin{align*}
		     \gamma_6    &= S^0    \odot (\one + S^5) \odot (\one + S^3) \odot (\one + S) \\
        \gamma_8    &= S^2 \odot (\one + S^5) \odot (\one + S^3) \odot (\one + S) \\
        \gamma_{10} &= S^4 \odot (\one + S^5) \odot (\one + S^3) \odot (\one + S) \\
        \gamma_{12} &= S^0 \odot (\one + S^5) \odot (\one + S^3) \odot (\one + S) = \gamma_6\\
				\gamma_{14} &= S^2 \odot (\one + S^5) \odot (\one + S^3) \odot (\one + S) = \gamma_8
    \end{align*}
    In contrast to the situation for $n$ odd, we observe that $\gamma_{12}=\gamma_6$ and $\gamma_{14} =\gamma_{8}$. In particular, the functions eventually repeat instead of vanishing for large $k$. Furthermore their algebraic degrees are not distinct for $k\geq n/2$.
\end{example}

The above example suggests that for $n$ even and  $k$ large,  the functions $\gamma_{2k}$ are equal
to ones with smaller $k$. The next lemma confirms this.
\begin{lemma}\label{lem:gamma_2k_equals_gamma_small_new}
    Let $n$ be even and $\gamma_{2k}:\F_2^n \to \F_2^n$. If $k \geq n/2$, then 
		$$
		\gamma_{2k} = S^{{2k\bmod n}}\odot(\one + S^{n-1})\odot (\one + S^{n-3})\odot \ldots \odot (\one + S).
		$$
		 In particular,
        \begin{itemize}
            \item[(a)] if  $k, k' \geq n/2$ and $2k \equiv 2k' \bmod n$, then $\gamma_{2k}=\gamma_{2k'}$;
            \item[(b)] if $2k \geq n$ then $\gamma_{2k}=\gamma_{2(\frac{n}{2} + k\bmod \frac{n}{2})}$.
        \end{itemize}
\end{lemma}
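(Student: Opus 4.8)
The plan is to simplify the defining product for $\gamma_{2k}$ using two elementary facts about the operations involved. First, $S^n=\id$, so $S^a=S^{a\bmod n}$ for every $a$; in particular $S^{2k}=S^{2k\bmod n}$ and each factor $\one+S^{2k-2i-1}$ with $0\leq i\leq k-1$ equals $\one+S^{j}$ where $j=(2k-2i-1)\bmod n$. Since $n$ is even, such a $j$ is again odd, hence $j\in\{1,3,\ldots,n-1\}$. Second, $f\odot f=f$ for every function $f:\F_2^n\to\F_2^n$, because $f(x)\odot f(x)=f(x)$ holds component-wise over $\F_2$; consequently a repeated factor $\one+S^j$ may be deleted from a $\odot$-product without changing it, and since $\odot$ is commutative the surviving factors may be reordered freely.

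The \emph{crux} is a short counting argument. As $i$ runs over $0,\ldots,k-1$, the exponents $2k-2i-1$ run through the $k$ consecutive odd integers $2k-1,2k-3,\ldots,3,1$. Reducing modulo $n$, consecutive odd integers cycle through the $n/2$ odd residues $1,3,\ldots,n-1$ with period $n/2$, so the list $1,3,\ldots,2k-1$ contains a representative of every odd residue class mod $n$ as soon as $k\geq n/2$ (for $k=n/2$ the list is exactly $1,3,\ldots,n-1$, so the claim holds there too). Applying idempotency to collapse repeated factors and commutativity to reorder, the product $(\one+S^{2k-1})\odot\ldots\odot(\one+S)$ therefore equals $(\one+S^{n-1})\odot(\one+S^{n-3})\odot\ldots\odot(\one+S)$, independently of $k$. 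Combining this with the leading factor $S^{2k\bmod n}$ gives the asserted formula for $\gamma_{2k}$.

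Part (a) is then immediate: if $k,k'\geq n/2$ with $2k\equiv 2k'\bmod n$, the two instances of the formula agree factor by factor, so $\gamma_{2k}=\gamma_{2k'}$. For part (b), set $k'=\frac{n}{2}+(k\bmod\frac{n}{2})$ and write $k=q\cdot\frac{n}{2}+r$ with $0\leq r<\frac{n}{2}$; then $2k'=n+2r\equiv 2r\equiv 2k\bmod n$, while $k'\geq\frac{n}{2}$ and $k\geq\frac{n}{2}$ by hypothesis, so part (a) applies and yields $\gamma_{2k}=\gamma_{2k'}$. I do not expect a genuine obstacle in this lemma; the only point needing care is stating the periodicity-and-covering claim for the odd residues cleanly and checking that the boundary case $k=n/2$ is subsumed by it.
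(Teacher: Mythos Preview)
Your proof is correct and follows essentially the same approach as the paper: reduce the leading $S^{2k}$ modulo $n$ via $S^n=\id$, reduce each odd exponent modulo $n$ (which stays odd since $n$ is even), and collapse repeated factors via idempotency of $\odot$. The paper's own proof merely cites these two facts and leaves the rest implicit; your version spells out the covering-and-periodicity of the odd residues and the derivations of (a) and (b), but there is no difference in method.
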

\begin{proof}
The statement follows from the observations that on $\F_2^n$ it holds $S^n=\id$ and 
\begin{equation*}
    (\one +S^j)\odot\ldots\odot (\one+S^j)=\one+S^j.
\end{equation*}
\end{proof}
As a direct consequence of \cref{lem:gamma_2k_equals_gamma_small_new} we get
\begin{lemma}\label{lem:gamma_2k_equals_gamma_2kminusn}
    Let $n$ be even. Then $\gamma_{2k} = \gamma_{2k-n}$ on $\F_2^n$ if $k \geq n$. 
\end{lemma}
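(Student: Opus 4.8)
The plan is to deduce the statement directly from \cref{lem:gamma_2k_equals_gamma_small_new}. The first thing to notice is that, since $n$ is even, the index $2k-n$ equals $2(k-n/2)$ with $k-n/2$ an integer; hence $\gamma_{2k-n}$ is simply the member $\gamma_{2k'}$ of the family $\gamma_{2j}$, $j\in\N$, for $k' := k-n/2$, and the claim asks precisely for the equality of two functions from that family. So the whole statement should be read as a comparison of $\gamma_{2k}$ and $\gamma_{2(k-n/2)}$.

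Next I would check that both relevant indices lie in the range covered by \cref{lem:gamma_2k_equals_gamma_small_new}: the hypothesis $k\geq n$ gives $k\geq n/2$ and, simultaneously, $k-n/2\geq n/2$. Since $2k\equiv 2(k-n/2)\pmod n$ (because $n\equiv 0\pmod n$), part (a) of \cref{lem:gamma_2k_equals_gamma_small_new} applies to the pair $k,\ k-n/2$ and yields $\gamma_{2k}=\gamma_{2(k-n/2)}=\gamma_{2k-n}$. Equivalently, one may simply invoke the explicit product formula of \cref{lem:gamma_2k_equals_gamma_small_new}: both $\gamma_{2k}$ and $\gamma_{2(k-n/2)}$ equal $S^{\,2k\bmod n}\odot(\one+S^{n-1})\odot(\one+S^{n-3})\odot\cdots\odot(\one+S)$, because $2k\bmod n=(2k-n)\bmod n$.

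There is essentially no obstacle here; all of the content already sits in \cref{lem:gamma_2k_equals_gamma_small_new}. The only point requiring a moment's care is the index bookkeeping — verifying $k-n/2\geq n/2$ — which is exactly the hypothesis $k\geq n$; this also shows that $k\geq n$ is the sharp threshold for which this particular argument goes through.
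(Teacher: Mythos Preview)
Your proof is correct and matches the paper's own argument essentially verbatim: the paper simply notes that $2k\equiv 2k-n\pmod n$ and $k,\,k-n/2\geq n/2$, then invokes \cref{lem:gamma_2k_equals_gamma_small_new}. Your write-up is just a more explicit version of the same one-line deduction.
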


\begin{proof} 
    This follows from \cref{lem:gamma_2k_equals_gamma_small_new}, since $2k \equiv 2k-n \bmod n$ and $k, (k-n/2) \geq n/2$.
\end{proof}

 For even $n$, \cref{lem:gamma_2k_equals_gamma_small_new} implies that the set of functions $\gamma_{2k}:\F_2^n \to \F_2^n$ with $ k \geq 0$   reduces to that of  $\gamma_{2k}$ with $0 \leq k \leq n-1$. 
 Next we prove that these $n$ functions are linearly independent over $\F_2$.

\begin{lemma}\label{lem:gamma_2k_are_lin_independent}
 Let $n$ be even.   The functions $\gamma_0, \gamma_2, \gamma_4,\ldots, \gamma_{2n-2}$ of $\F_2^n$ are linearly independent over $\F_2$.
\end{lemma}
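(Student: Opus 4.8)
The plan is to reduce the claim to a statement about a single coordinate, and then to separate the $n$ functions by reading off a few well-chosen coefficients of their algebraic normal forms (ANFs).

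Since every $\gamma_{2k}$ is shift-invariant, a vanishing $\F_2$-linear combination $\sum_{k=0}^{n-1}c_k\gamma_{2k}=0$ in particular has vanishing first coordinate, so it suffices to prove that the $n$ functions $g_k:\F_2^n\to\F_2$ given by $g_k(x):=\gamma_{2k}(x)_1$, $0\le k\le n-1$, are linearly independent; I will verify this on the level of ANFs. Unwinding the definition gives $g_k(x)=x_{1+2k}\prod_{j=1}^{k}(1+x_{2j})$ with indices taken in $\{1,\dots,n\}$. For $0\le k\le n/2-1$ there is no wraparound, so $g_k$ is a product of $k+1$ factors in pairwise distinct variables, with highest monomial $x_{1+2k}x_2x_4\cdots x_{2k}$ of degree $k+1$. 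For $n/2\le k\le n-1$, \cref{lem:gamma_2k_equals_gamma_small_new} collapses the repeated factors and gives $g_k(x)=x_{1+(2k\bmod n)}\cdot Q(x)$, where $Q:=(1+x_2)(1+x_4)\cdots(1+x_n)$; as $k$ runs through $\{n/2,\dots,n-1\}$, the index $1+(2k\bmod n)$ runs bijectively through the odd numbers $\{1,3,\dots,n-1\}$.

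The core of the argument is a two-step coefficient extraction. For $0\le k\le n/2-1$ put $N_k:=\{1+2k\}\cup\{2,4,\dots,n\}$ and $M_k:=\{1+2k\}\cup\{2,4,\dots,2k\}$ (so $M_0=\{1\}$), and track the ANF coefficients of the monomials $\prod_{i\in N_k}x_i$ and $\prod_{i\in M_k}x_i$ across all $g_{k'}$. Since $n$ is even, every monomial that occurs in some $g_{k'}$ has exactly one odd index, namely $1+2k'$ if $k'<n/2$ and $1+(2k'\bmod n)$ if $k'\ge n/2$; a comparison of this odd index, together with an elementary degree count, shows that $\prod_{i\in N_k}x_i$ occurs only in $g_{n/2+k}$, whereas $\prod_{i\in M_k}x_i$ occurs exactly in $g_k$ and in $g_{n/2+k}$. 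Substituting into $\sum_{k'}c_{k'}g_{k'}=0$, the $N_k$-coefficients yield $c_{n/2+k}=0$ for all $k\in\{0,\dots,n/2-1\}$, and then the $M_k$-coefficients yield $c_k+c_{n/2+k}=c_k=0$; hence all $c_{k'}$ vanish and the $\gamma_{2k}$ are linearly independent.

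The main obstacle is that, in contrast with the case of odd $n$, the high-index functions $\gamma_{2k}$ with $k\ge n/2$ all carry the same factor $Q$ and differ only by a cyclic shift, so they all have algebraic degree $n/2+1$ and even overlap the low-index functions (for instance $g_{n/2}=x_1Q$ already contains the monomial $x_1=g_0$). A plain leading-term argument therefore cannot separate them, and the two-monomial device above is precisely what resolves this; the only other thing to be careful about is the bookkeeping of indices modulo $n$, for which \cref{lem:gamma_2k_equals_gamma_small_new} does the work, together with a routine check of the degenerate case $k=0$ and of small $n$.
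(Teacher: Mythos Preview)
Your proof is correct and follows essentially the same path as the paper's: restrict to one coordinate and separate the functions by ANF coefficients. The paper is simply more economical, observing in one line that the unique highest-degree monomial of each coordinate function $\gamma_{2k}(x)_0$ is distinct from all the others --- these monomials are exactly your $M_k$ (for $k<n/2$) and $N_k$ (for $k\ge n/2$) --- and this already forces linear independence by the standard ``peel off the top degree'' argument; so your remark that ``a plain leading-term argument cannot separate them'' is off the mark (equal leading \emph{degrees} are harmless provided the leading \emph{monomials} differ), and your two-step extraction is precisely that leading-monomial argument written out in full.
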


\begin{proof}
    Let $x=(x_0,x_1,\ldots,x_{n-1})$. Then the coordinate functions $\gamma_{2k}(x)_0$  are given by
    \begin{align*}
        \gamma_0(x)_0       &= x_0 \\
        \gamma_2(x)_0       &= x_2 (1+x_1) \\
        \gamma_4(x)_0       &= x_4 (1+x_3)(1+x_1) \\
        \vdots \\
        \gamma_{n-2}(x)_0   &= x_{n-2}(1+x_{n-3})\ldots (1+x_1) \\
        \gamma_n(x)_0       &= x_0 (1+x_{n-1})(1+x_{n-3})\ldots (1+x_1) \\
        \gamma_{n+2}(x)_0   &= x_2 (1+x_{n-1})(1+x_{n-3})\ldots (1+x_1) \\
        \vdots \\
        \gamma_{2n-2}(x)_0  &= x_{n-2}(1+x_{n-1})(1+x_{n-3})\ldots (1+x_1).
    \end{align*}
		Note that the highest degree monomials in these coordinate functions are pairwise different, implying that
		they are linearly independent over $\F_2$. 
\end{proof}

\begin{lemma}\label{lem:gamma_2k_algebraic_degree}
    Let $n$ be even, $k\geq 0$ and $d$ be the algebraic degree of $\gamma_{2k}:\F_2^n\to \F_2^n$. Then
    \begin{equation*}
        d = \begin{cases}
            k+1 & k\leq n/2 \\
            n/2+1 & k \geq n/2.
        \end{cases}
    \end{equation*}
\end{lemma}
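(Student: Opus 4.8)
The plan is to compute the algebraic degree of $\gamma_{2k}$ directly from its explicit product formula and the normalizations established in the preceding lemmas, splitting into the two ranges $k\leq n/2$ and $k\geq n/2$.

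First I would handle the case $k\leq n/2$. Here one works directly with the definition
$\gamma_{2k} = S^{2k}\odot(\one+S^{2k-1})\odot\cdots\odot(\one+S)$, which has $k$ factors of the form $(\one+S^{2j-1})$, $j=1,\dots,k$, together with the monomial factor $S^{2k}$. Looking at the $0$th coordinate function, as in the proof of \cref{lem:gamma_2k_are_lin_independent}, gives $\gamma_{2k}(x)_0 = x_{2k}\prod_{j=1}^{k}(1+x_{2j-1})$. For $2k<n$ the index $2k$ is even while the indices $1,3,\dots,2k-1$ are odd, so all $k+1$ variables appearing are distinct; hence the top monomial $x_{2k}x_{2k-1}\cdots x_3 x_1$ has degree exactly $k+1$, and no coordinate can do better since each coordinate of $\gamma_{2k}$ is a shift of this one and involves $k+1$ factors. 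For $k=n/2$ the index $2k=n\equiv 0$ wraps around, but $x_0$ is still distinct from $x_1,x_3,\dots,x_{n-1}$, so again the degree is $n/2+1$. This settles $d=k+1$ for $k\leq n/2$, including the boundary value $d=n/2+1$ at $k=n/2$.

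For $k\geq n/2$, I would invoke \cref{lem:gamma_2k_equals_gamma_small_new}, which says
$\gamma_{2k} = S^{2k\bmod n}\odot(\one+S^{n-1})\odot(\one+S^{n-3})\odot\cdots\odot(\one+S)$. Now there are exactly $n/2$ factors $(\one+S^{n-1}),(\one+S^{n-3}),\dots,(\one+S)$, whose shift exponents $1,3,\dots,n-1$ run over all odd residues mod $n$, together with the single monomial factor $S^{2k\bmod n}$ whose exponent is even. The $0$th coordinate is therefore $x_{2k\bmod n}\prod_{i\ \mathrm{odd},\,1\leq i\leq n-1}(1+x_i)$; since $2k\bmod n$ is even it coincides with no odd index, so there are $n/2+1$ distinct variables and the top monomial has degree $n/2+1$. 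An upper bound of $n/2+1$ is automatic because a product of $n/2$ linear factors $(1+x_i)$ times one variable can have degree at most $n/2+1$. Hence $d=n/2+1$ for all $k\geq n/2$, consistent with the $k=n/2$ value obtained above.

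I do not expect a genuine obstacle here: the work is essentially the same bookkeeping as in \cref{lem:gamma_2k_are_lin_independent}, combined with the parity observation that the lone monomial factor always has an even exponent while the $(\one+S^j)$ factors always have odd exponents, so the leading monomial of the $0$th coordinate is a squarefree product of $d$ distinct coordinates and no cancellation occurs. The only mild point requiring a sentence of care is the upper bound: one should note that the other coordinate functions $\gamma_{2k}(x)_\ell$ are obtained by applying $S^{-\ell}$, i.e.\ by shifting all indices, so they are products of the same number of factors and cannot exceed the degree found for the $0$th coordinate.
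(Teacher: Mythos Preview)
Your proposal is correct and follows essentially the same approach as the paper: compute the $0$th coordinate function exactly as in the proof of \cref{lem:gamma_2k_are_lin_independent}, identify its top monomial, and then invoke shift-invariance to conclude that all coordinate functions have the same degree. The paper's own proof is a one-line reference to \cref{lem:gamma_2k_are_lin_independent} together with shift-invariance; your version simply spells out the parity observation (even exponent on the lone $S^{2k}$ factor versus odd exponents on the $(\one+S^{2j-1})$ factors) and treats the two ranges $k\leq n/2$ and $k\geq n/2$ explicitly, the latter via \cref{lem:gamma_2k_equals_gamma_small_new}.
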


\begin{proof}
    The statement follows immediately from the proof of \cref{lem:gamma_2k_are_lin_independent}, since all coordinate
		functions of $\gamma_{2k}$ have the same multivariate degree due to the shift-invariance.
\end{proof}

We consider the set of the linear combinations of the functions $\gamma_{2k}:\F_2^n\to \F_2^n$ over $\F_2$
\begin{equation*}
    \Gamma_n :=\left\{\sum_{k=0}^\ell a_k \gamma_{2k}: \ell \geq 0, ~a_k\in\F_2\right\} = \left\{\sum_{k=0}^{n-1} a_k \gamma_{2k}: a_k\in\F_2\right\}.
\end{equation*}
 Note that the latter equality follows from \cref{lem:gamma_2k_equals_gamma_small_new}. 

\begin{lemma}
 Let $n$ be even. Then the set   $\Gamma_n$ is an $n$-dimensional vector space over $\F_2$  and $\{\gamma_0,\gamma_2,\ldots,\gamma_{2n-2}\}$ is a basis of it. 
\end{lemma}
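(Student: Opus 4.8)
The plan is to deduce this statement directly from the two preceding lemmas. By definition, $\Gamma_n$ is the $\F_2$-span of all functions $\gamma_{2k}$ with $k \geq 0$. First I would invoke \cref{lem:gamma_2k_equals_gamma_small_new}, which shows that every $\gamma_{2k}$ with $k \geq n$ (indeed with $k \geq n/2$) coincides with some $\gamma_{2k'}$ where $0 \leq k' \leq n-1$; concretely, part (b) of that lemma reduces the index $2k$ modulo the relevant period. Hence the spanning set $\{\gamma_{2k} : k \geq 0\}$ can be replaced by the finite set $\{\gamma_0, \gamma_2, \ldots, \gamma_{2n-2}\}$ without changing the span, which is precisely the second description of $\Gamma_n$ displayed just before the lemma.

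Next I would invoke \cref{lem:gamma_2k_are_lin_independent}, which states that the $n$ functions $\gamma_0, \gamma_2, \ldots, \gamma_{2n-2}$ are linearly independent over $\F_2$. Combining the two facts: $\{\gamma_0, \gamma_2, \ldots, \gamma_{2n-2}\}$ is a spanning set of size $n$ that is linearly independent, hence a basis. Therefore $\Gamma_n$ is a vector space over $\F_2$ of dimension exactly $n$, with the stated basis.

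There is essentially no obstacle here — the lemma is a bookkeeping corollary of the work already done. The only point worth stating carefully is that $\Gamma_n$ is closed under the vector space operations: it is closed under addition by construction (a sum of two $\F_2$-linear combinations of the $\gamma_{2k}$ is again such a combination), and scalar multiplication over $\F_2$ is trivial (multiply by $0$ or $1$), so $\Gamma_n$ is genuinely a subspace of the space of all functions $\F_2^n \to \F_2^n$. I would write the proof in two or three sentences: closure gives a vector space, \cref{lem:gamma_2k_equals_gamma_small_new} gives that $\gamma_0,\dots,\gamma_{2n-2}$ span it, \cref{lem:gamma_2k_are_lin_independent} gives independence, and counting the $n$ basis elements yields the dimension.
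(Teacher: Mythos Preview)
Your proposal is correct and follows essentially the same approach as the paper: observe that $\Gamma_n$ is by definition a subspace of the space of all functions $\F_2^n\to\F_2^n$, use \cref{lem:gamma_2k_equals_gamma_small_new} to reduce the generating set to $\{\gamma_0,\gamma_2,\ldots,\gamma_{2n-2}\}$, and then invoke \cref{lem:gamma_2k_are_lin_independent} for linear independence. If anything, your write-up is slightly more explicit than the paper's about the spanning step and the subspace verification.
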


\begin{proof}
    Let $V$ be the $\F_2$-vector space of all functions of $\F_2^n$.
    The set $\Gamma_n$ is defined as the subspace of $V$ generated by functions $\gamma_{2k}$, $k\geq 0$. \cref{lem:gamma_2k_are_lin_independent} shows that $\{\gamma_0,\gamma_2,\ldots,\gamma_{n-1}\}$ is a basis of $\Gamma_n$. 
\end{proof}

Lemmas 4--7 of \cite{kriepke2024algebraic} show
  that compositions of the linear combinations of
 functions $\gamma_{2k}$ have a strongly structured behavior. However these lemmas
 hold  independently on the parity of $n$ (and in fact also for $\gamma_{2k}$ considered as functions on $\F_2^\N$ or $\F_2^\Z$), which makes the study of functions from 
 $\Gamma_n$ interesting also for $n$ even.
 Lemma 7 from those results is the key lemma for our considerations here, therefore  we state  it below. 
 Let $G_n$ to denote the coset of $\gamma_0 = \id$ with respect to the subspace
 generated by the remaining $\gamma_{2k}:\F_2^n \to \F_2^n, k\geq 1$, that is 
\begin{equation*}
    G_n:=\gamma_0+\spn\{\gamma_2, \gamma_4, \ldots, \gamma_{2n-2}\}. 
\end{equation*}
The next lemma shows that the left composition of 
functions from $G_n$ with any $\gamma_{2k}$ is again a 
linear combination from $\Gamma_n$.
\begin{lemma}[\cite{kriepke2024algebraic}, Lemma 7]\label{key_lem:first_closure_property}
    Let $m\geq 2$ be even and ${f=\gamma_0+\sum_{i=1}^t a_i \gamma_{2i} \in G_n}$. Then 
    \begin{equation*}
        \gamma_m\circ f = \sum_{i=0}^t a_i \gamma_{2i+m} \in \Gamma_n.
    \end{equation*}
\end{lemma}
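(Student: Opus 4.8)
\end{lemma}

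The plan is to prove the displayed identity by induction on the even integer $m$, run starting from $m=0$: there it reads $\gamma_0\circ f=f=\sum_{i=0}^t a_i\gamma_{2i}$ with the convention $a_0=1$, so there is nothing to prove, and every even $m\ge 2$ is produced by the induction step. The engine of the induction is the recursion
\[
\gamma_{m+2}(y)=S^2\big(\gamma_m(y)\big)\odot\big(\one+S(y)\big)\qquad\text{for every even }m\ge 0\text{ and every }y\in\F_2^n,
\]
which I would establish first. For $m+2\le n$ it is immediate from the definition of $\gamma_{m+2}$ and the rule $S(u\odot v)=S(u)\odot S(v)$, which gives $S^2(\gamma_m(y))=S^{m+2}(y)\odot(\one+S^{m+1}(y))\odot\cdots\odot(\one+S^3(y))$; for $m+2>n$ it follows the same way using $S^n=\id$, \cref{lem:gamma_2k_equals_gamma_small_new}, and the idempotence $(\one+S^j(y))\odot(\one+S^j(y))=\one+S^j(y)$.

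Granting the recursion, the induction step is purely formal. Assuming $\gamma_m\circ f=\sum_{l=0}^t a_l\gamma_{2l+m}$, I apply the recursion with $y=f(x)$, substitute $\gamma_m(f(x))=\sum_l a_l\gamma_{2l+m}(x)$, pull the linear map $S^2$ through the sum, and distribute $\odot(\one+S(f(x)))$ to obtain $\gamma_{m+2}(f(x))=\sum_l a_l\big(S^2(\gamma_{2l+m}(x))\odot(\one+S(f(x)))\big)$. Since $f(x)=x+\sum_{i=1}^t a_i\gamma_{2i}(x)$, we have $\one+S(f(x))=\one+S(x)+\sum_i a_i S(\gamma_{2i}(x))$, so $S^2(\gamma_{2l+m}(x))\odot(\one+S(f(x)))$ is $S^2(\gamma_{2l+m}(x))\odot(\one+S(x))$ plus the cross terms $a_i\,S^2(\gamma_{2l+m}(x))\odot S(\gamma_{2i}(x))$. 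If the cross terms all vanish, the $l$-th summand collapses to $S^2(\gamma_{2l+m}(x))\odot(\one+S(x))=\gamma_{2l+m+2}(x)$ by the recursion again (with $m$ replaced by $2l+m$ and $y=x$), and summing over $l$ gives $\gamma_{m+2}\circ f=\sum_l a_l\gamma_{2l+m+2}$, completing the induction.

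Thus everything reduces to the orthogonality
\[
S^2\big(\gamma_{2k}(x)\big)\odot S\big(\gamma_{2i}(x)\big)=0\qquad\text{on }\F_2^n,\ \text{for all }k\ge 0\text{ and all }i\ge 1,
\]
which I expect to be the crux. I would prove it from the factored shapes of the two functions. Set $P=S^2(\gamma_{2k}(x))$ and $Q=S(\gamma_{2i}(x))$; using the factorisation of $\gamma_{2k}$ (and \cref{lem:gamma_2k_equals_gamma_small_new} once $2k\ge n$) together with idempotence of $\odot$, one gets the absorption relations $P=P\odot S^{2k+2}(x)$ and $P=P\odot(\one+S^j(x))$ for every odd $j$ with $3\le j\le 2k+1$, and, dually, $Q=Q\odot S^{2i+1}(x)$ and $Q=Q\odot(\one+S^j(x))$ for every even $j$ with $2\le j\le 2i$. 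If $1\le i\le k$, then $j=2i+1$ lies in the first range, so $P\odot Q=P\odot Q\odot(\one+S^{2i+1}(x))\odot S^{2i+1}(x)=0$ because $z\odot(\one+z)=0$; if $i\ge k+1$, then $j=2k+2$ lies in the second range and $P\odot Q=0$ by the symmetric manipulation. Since $k\ge 0$, these two cases exhaust all $i\ge 1$. (Alternatively, one proves the orthogonality first for the $\gamma_{2k}$ viewed as maps on $\F_2^{\Z}$, where it is a wrap-around-free polynomial identity, and then reduces it modulo $n$, bypassing \cref{lem:gamma_2k_equals_gamma_small_new}.)
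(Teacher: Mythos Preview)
The paper does not supply its own proof of this lemma: it is quoted from \cite{kriepke2024algebraic} (Lemma~7 there) with the remark that the identity holds independently of the parity of $n$, indeed already for the $\gamma_{2k}$ viewed on $\F_2^\Z$. So there is no in-paper argument to compare against.

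Your proof is correct. Two minor simplifications are available. First, the recursion $\gamma_{m+2}(y)=S^2(\gamma_m(y))\odot(\one+S(y))$ is a purely formal consequence of the definition together with $S(u\odot v)=S(u)\odot S(v)$: the defining product for $\gamma_{m+2}$ is literally $S^2(\gamma_m)\odot(\one+S)$, so no case split ``$m+2\le n$ versus $m+2>n$'' and no appeal to \cref{lem:gamma_2k_equals_gamma_small_new} are needed. Second, the same remark applies to the orthogonality $S^2(\gamma_{2k}(x))\odot S(\gamma_{2i}(x))=0$: the absorption relations you invoke hold on $\F_2^n$ because the relevant factors are present in the \emph{defining} $\odot$-products for $P$ and $Q$, and idempotence of $\odot$ then gives them without any wrap-around bookkeeping. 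Your parenthetical alternative---prove everything on $\F_2^\Z$ and then restrict to periodic sequences---is exactly the viewpoint the paper adopts when citing the result, and is the cleanest formulation.
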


\cref{key_lem:first_closure_property} indicates that the composition of the functions in $G_n$ has strong similarities with the multiplication
of polynomials over $\F_2$. Indeed, 
\begin{equation*}
    \gamma_2 \circ(\gamma_0+\gamma_4+\gamma_6) = \gamma_2+\gamma_6+\gamma_8
\end{equation*}
looks very much like
\begin{equation*}
    X^2 \cdot (1+X^4+X^6) = X^2+X^6+X^8.
\end{equation*}
After dividing the exponents of the polynomials by $2$, we then obtain correspondence of the form
\begin{equation*}
    \sum_{i=0}^t a_i \gamma_{2i} \mapsto \sum_{i=0}^t a_i X^i. 
\end{equation*}
Further, by \cref{lem:gamma_2k_equals_gamma_2kminusn} the equality $\gamma_{2k}=\gamma_{2k-n}$ holds  for all $k$ with $k\geq n$, when $n$ is even. This means that in the polynomial setting we need to require $X^k = X^{k-n/2}$, or equivalently $X^k+X^{k-n/2}=0$ if $k\geq n$. We can achieve this by considering the polynomials modulo $X^n+X^{n/2}$. In other words, we work in the residue
ring 
$$\mathcal{R}_n =\F_2[X]/(X^n+X^{n/2}),$$ 
where
 $(X^n+X^{n/2})$ is the ideal generated by the polynomial  $X^n+X^{n/2}$ in $\F_2[X]$.  \\

For a polynomial $f \in \F_2[X]$, let $[f]=f+(X^n+X^{n/2})$  denote the coset of $f$ in $\mathcal{R}_n$.
Then the maps $\varphi:\Gamma_n\to \R_n$ 
and $\psi: \R_n \to \Gamma_n$ are well-defined and inverse each to other, where 
\begin{equation*}
    \varphi\left(\sum_{i=0}^t a_i \gamma_{2i}\right)=\left[\sum_{i=0}^t a_i X^i\right]
\end{equation*}
and
\begin{equation*}
    \psi\left(\left[\sum_{i=0}^t a_i X^i\right]\right) =
    \sum_{i=0}^t a_i \gamma_{2i}.
\end{equation*}
In particular, $\varphi$ and $\psi$ are bijections.

 Further, we define the set
\begin{equation*}
    \M_n := \left\{\left[1+\sum_{i=1}^ta_i X^i\right] : t \geq 1, a_i \in \F_2 \right\}\subseteq \R_n.
\end{equation*}
Note that $\M_n$ is closed under the multiplication of $\R_n$ and  $[1]\in \M_n$. Hence $\M_n$ is a monoid with respect to the multiplication. Recall, that a monoid $(M,\star)$ is a set together with an associative operation $\star:M\times M\to M$ such that there exists a neutral element $e\in M$ with respect to $\star$. The invertible elements of $M$ are called units and their set is denoted by $M^*$.
Now we are ready to prove our first main result. It is worth to note that its  proof is an analogue of that for $n$ odd from \cite{kriepke2024algebraic}.

\begin{theorem}\label{thm:isom-monoid}
    Let $n\geq 2$ be even. Then
    \begin{enumerate}
        \item[(a)] $G_n$ is an Abelian monoid with respect to composition. The map $\varphi:G_n\to \M_n$ is a monoid isomorphism. In particular,  $G_n\cong \M_n$. 
        \item[(b)] The set  $G_n^*$ of invertible (equivalently, bijective) functions from $G_n$ is an Abelian group with respect to composition, which is isomorphic to the unit group $\R_n^* = \M_n^*$ of $\R_n$.
    \end{enumerate}
\end{theorem}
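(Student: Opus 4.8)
The plan is to transport everything through the map $\varphi$ and thereby reduce composition in $G_n$ to multiplication in $\R_n$, exactly as in the $n$ odd case. The computational core is the identity
\begin{equation*}
    \varphi(g\circ f)=\varphi(g)\cdot\varphi(f)\qquad\text{for all }g\in\Gamma_n\text{ and }f\in G_n,
\end{equation*}
which in particular already shows $g\circ f\in\Gamma_n$. To prove it I would use that the map $h\mapsto h\circ f$ on $\Gamma_n$ is $\F_2$-linear (because $(h_1+h_2)\circ f=h_1\circ f+h_2\circ f$) and that $\varphi$ is $\F_2$-linear, so it suffices to verify the identity on the basis $\gamma_0,\gamma_2,\ldots,\gamma_{2n-2}$ of $\Gamma_n$. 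For $g=\gamma_0=\id$ it is trivial. For $g=\gamma_{2k}$ with $k\geq1$ it is exactly \cref{key_lem:first_closure_property}: writing $f=\gamma_0+\sum_{i=1}^ta_i\gamma_{2i}$ (so that $a_0=1$), that lemma gives $\gamma_{2k}\circ f=\sum_{i=0}^ta_i\gamma_{2(k+i)}$, and applying $\varphi$ term by term — using that $\varphi(\gamma_{2\ell})=[X^\ell]$ for every $\ell\geq0$ because $\varphi$ is well defined — yields $[X^k]\cdot\varphi(f)=\varphi(\gamma_{2k})\varphi(f)$.

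Part (a) then follows quickly. For $f,g\in G_n$ the identity gives $f\circ g\in\Gamma_n$ with $\varphi(f\circ g)=\varphi(f)\varphi(g)\in\M_n$, since $\M_n$ is closed under multiplication. Moreover $\varphi$ restricts to a bijection $G_n\to\M_n$: one has $\varphi(G_n)=\{[1+\sum_{i=1}^{n-1}a_iX^i]\}$, and this equals $\M_n$ because reduction modulo $X^n+X^{n/2}$ replaces every monomial of degree $\geq n$ by monomials of degree $\geq n/2\geq1$ and hence never alters the constant term. Therefore $f\circ g=\varphi^{-1}(\varphi(f)\varphi(g))\in G_n$, so $G_n$ is closed under the associative operation $\circ$ and has neutral element $\gamma_0$; it is a monoid. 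Injectivity of $\varphi$ and commutativity of $\R_n$ give $f\circ g=g\circ f$, and since $\varphi(\gamma_0)=[1]$, the bijection $\varphi\colon G_n\to\M_n$ is a monoid isomorphism.

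For part (b) I would first record that $\R_n^*=\M_n^*$: a unit of $\R_n$ is coprime to $X$, a divisor of $X^n+X^{n/2}$, hence has constant term $1$ and lies in $\M_n$, and its inverse lies in $\M_n$ for the same reason, while the reverse inclusion $\M_n^*\subseteq\R_n^*$ is immediate. Consequently the monoid isomorphism $\varphi$ carries the unit group of $G_n$ isomorphically onto $\M_n^*=\R_n^*$, which is Abelian. It then remains to identify the units of the monoid $G_n$ with the bijective functions in $G_n$, and this is the only step that is not purely formal. A monoid-unit is plainly a bijection. For the converse, let $f\in G_n$ be a bijection of $\F_2^n$; since $G_n$ is finite, the compositional iterates $\gamma_0,f,f\circ f,f\circ f\circ f,\ldots$ cannot all be distinct, so $f^{\circ a}=f^{\circ b}$ for some $a>b\geq0$; composing both sides with the $b$-fold iterate of the inverse function of $f$ yields $f^{\circ(a-b)}=\gamma_0$, whence $f^{\circ(a-b-1)}\in G_n$ is a compositional inverse of $f$. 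This proves $G_n^*=\{\text{bijections in }G_n\}=\{\text{units of the monoid }G_n\}$, so $G_n^*$ is a group isomorphic to $\R_n^*$, completing the proof. Apart from this last argument, everything — the linearity bookkeeping, the closure of $\M_n$ under multiplication, and the reductions modulo $X^n+X^{n/2}$ — is routine.
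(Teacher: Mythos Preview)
Your proof is correct and follows essentially the same approach as the paper: both arguments hinge on \cref{key_lem:first_closure_property} together with the $\F_2$-linearity of $h\mapsto h\circ f$ and of $\varphi$ to establish $\varphi(g\circ f)=\varphi(g)\varphi(f)$, and both invoke finiteness of $G_n$ to identify bijections with monoid units. The only cosmetic difference is the order of presentation: the paper first checks closure of $G_n$ directly and then proves the homomorphism property, whereas you prove the homomorphism identity first and pull closure back from $\M_n$ through the bijection $\varphi$; your write-up is also slightly more explicit about why $\R_n^*=\M_n^*$ and why bijections are units.
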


\begin{proof}
    We first show that $G_n$ is a monoid. Clearly $\id=\gamma_0\in G_n$, so we only need to show that $G_n$ is closed under composition. Set $\mC_n :=\spn\{\gamma_2,\ldots,\gamma_{2n-2}\}$, and then $G_n=\gamma_0+\mC_n$. By \cref{key_lem:first_closure_property}, we have $\gamma_m\circ g\in \mC_n$ for all $g\in G_n$. Let now $f,g\in G_n$ with $f=\gamma_0+\sum_{i=1}^k a_i\gamma_{2i}$. Then
    \begin{equation*}
        f\circ g = \left(\gamma_0 + \sum_{i=1}^k a_i \gamma_{2i}\right)\circ g 
            = \underbrace{\gamma_0\circ g}_{=g} + \left(\sum_{i=1}^k a_i \gamma_{2i}\right)\circ g 
            = g + \sum_{i=1}^k a_i \underbrace{\gamma_{2i}\circ g}_{\in \mathcal{C}_n} \in \gamma_0 + \mathcal{C}_n = G_n 
    \end{equation*}
    since $g\in\gamma_0+\mC_n$ and $\mC_n$ is a subspace.

    Next we show that $\varphi$ is a monoid homomorphism. From the definition it is clear that $\varphi$ (as a map $\Gamma_n\to \R_n$) is additive. Let $f=\sum_{i=0}^k a_i \gamma_{2i}, g=\sum_{j=0}^m b_j\gamma_{2j}\in G_n$ with $a_0=b_0=1$. Using \cref{key_lem:first_closure_property} we have 
    \begin{align*}
        \varphi(f\circ g) &= \varphi\left(\left(\sum_{i=0}^k a_i\gamma_{2i}\right)\circ g\right) 
                = \sum_{i=0}^k a_i \varphi(\gamma_{2i}\circ g) \\
                &= \sum_{i=0}^k a_i \varphi\left(\sum_{j=0}^m b_j \gamma_{2j+2i}\right) 
                = \sum_{i=0}^k a_i \left[\sum_{j=0}^m b_j X^{j+i}\right] \\
                &= \sum_{i=0}^k a_i \left[X^i\right]\left[\sum_{j=0}^m b_j X^j\right] 
                = \left[\sum_{i=0}^k a_i X^i\right]\left[\sum_{j=0}^m b_j X^j\right] 
                = \varphi(f)\varphi(g).
    \end{align*}
    Since $\varphi(\gamma_0)=[1]$ is the identity in $\M_n$, we get that $\varphi$ is a monoid homomorphism between $G_n$ and $\M_n$. As $\varphi$ is bijective it is then a monoid isomorphism. To complete the proof of part (a), note that $\M_n$ is an Abelian monoid and thus $G_n$ is Abelian too.  

    Observe that since $G_n$ is a finite monoid with respect to composition, a function $f\in G_n$ is bijective if and only if it is invertible in $G_n$. That is, the inverse function of a bijection from $G_n$ is contained in $G_n$ too. Since $\varphi(\id) = [1]$, the invertible functions in $G_n$ correspond to those in $\M_n$. It remains to observe that the units in $\R_n$ are contained in $\M_n$.
\end{proof}

Recall that for any $p(X) \in \F_2[X]$, the unit group of the residue ring $S=\F_2[X]/(p(X))$ is given by 
\begin{equation*}
    S^* = \{[g(X)]: g(X) \in \F_2[X] \text{ s.t. } \gcd(g(X),p(X))=1\}.
\end{equation*}
In particular, the unit group $S^*$ depends heavily on the factorization of $p(X)$ in $\F_2[X]$. If $[g(X)]\in \F_2[X]/(p(X))$ is a unit, then its inverse $[g(X)]^{-1}$ can be computed using the Extended Euclidean Algorithm in $\F_2[X]$. \\

The above discussions with \cref{thm:isom-monoid} yield our next main result.\\

\begin{theorem}\label{thm:permutation_iff_gcd_1}
    Let $n$ be even and $\displaystyle f=\gamma_0+\sum_{i=1}^{n-1} a_i \gamma_{2i}\in G_n$. Then $f$ is a permutation on $\F_2^n$  if and only if $\displaystyle{\varphi(f)=\left[1+\sum_{i=1}^{n-1} a_i X^i\right]\in \M_n}$ is a unit in $\F_2[X]/(X^n+X^{n/2})$, or equivalently, if and only if in $\F_2[X]$ it holds
    \begin{equation*}
        \gcd\left(1+\sum_{i=1}^{n-1} a_i X^i, 1+X^{n/2}\right)=\gcd\left(1+\sum_{i=1}^{n-1} a_i X^i, 1+X^{m}\right) =1,
    \end{equation*}
    where $m$ is the largest odd divisor of $n$.
\end{theorem}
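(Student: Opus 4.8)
The plan is to extract the first equivalence directly from \cref{thm:isom-monoid} and then to unwind the unit condition in $\R_n$ into the two gcd conditions by an elementary factorization argument. First I would set $g(X) = 1 + \sum_{i=1}^{n-1} a_i X^i$, so that $\varphi(f) = [g(X)]$. By part (b) of \cref{thm:isom-monoid}, $\varphi$ restricts to a group isomorphism from the bijective functions in $G_n$ onto $\R_n^* = \M_n^*$, and since every permutation in $\Gamma_n$ lies in $G_n$, the function $f$ is a permutation on $\F_2^n$ if and only if $\varphi(f)$ is a unit of $\R_n = \F_2[X]/(X^n + X^{n/2})$. By the description of the unit group of a polynomial residue ring recalled above, this is in turn equivalent to $\gcd(g(X), X^n + X^{n/2}) = 1$ in $\F_2[X]$.

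Next I would factor $X^n + X^{n/2} = X^{n/2}(X^{n/2} + 1)$. Since $g(X)$ has constant term $1$, the irreducible polynomial $X$ does not divide $g(X)$, so $\gcd(g(X), X^{n/2}) = 1$, and therefore $\gcd(g(X), X^n + X^{n/2}) = \gcd(g(X), 1 + X^{n/2})$. This already establishes the first displayed gcd identity in the statement.

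Finally, to pass from $1 + X^{n/2}$ to $1 + X^m$, I would write $n = 2^s m$ with $m$ odd and $s \geq 1$, so that $m$ is the largest odd divisor of $n$ and $n/2 = 2^{s-1} m$. Over $\F_2$ the Frobenius (freshman's dream) gives $1 + X^{n/2} = 1 + X^{2^{s-1} m} = (1 + X^m)^{2^{s-1}}$, hence $1 + X^{n/2}$ and $1 + X^m$ have exactly the same set of irreducible divisors in $\F_2[X]$. Consequently $g(X)$ has a nontrivial common factor with one of them precisely when it has one with the other, i.e.\ $\gcd(g(X), 1 + X^{n/2}) = 1$ if and only if $\gcd(g(X), 1 + X^m) = 1$, which completes the proof.

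I do not expect a genuine obstacle here: once \cref{thm:isom-monoid} and the recalled unit-group description are in hand, everything is routine polynomial algebra in $\F_2[X]$. The one spot deserving an explicit sentence is the last reduction — replacing $1 + X^{n/2}$ by its radical $1 + X^m$ is legitimate for testing coprimality but would be false as a statement about the residue rings themselves, so I would phrase it carefully in terms of shared irreducible factors rather than as an identity of ideals.
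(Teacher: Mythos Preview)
Your proof is correct and follows essentially the same route as the paper: invoke \cref{thm:isom-monoid}(b) together with the recalled description of units in $\F_2[X]/(p(X))$, then factor $X^n+X^{n/2}=X^{n/2}(X^{n/2}+1)=X^{n/2}(1+X^m)^{2^{s-1}}$ and use $g(0)=1$ to drop the $X^{n/2}$ factor. The paper compresses steps 2--4 into a single line, but the content is identical.
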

\begin{proof}
  Let $n=2^s\cdot m$ with $s\geq 1$ and $m\geq 1$ odd.  Then 
    \begin{equation*}
        X^n + X^{n/2} = X^{n/2}(X^{n/2}+1) = X^{n/2}(X^m+1)^{2^{s-1}}
    \end{equation*}  
 and the statement follows from the discussions preceding the theorem.
\end{proof}

For $n$ odd it was shown in \cite{kriepke2024algebraic} that $G_n = \gamma_0 + \spn\{\gamma_2,\ldots,\gamma_{n-1}\}$ is isomorphic to the monoid
\begin{equation*}
    \M_n = \left\{\left[1+\sum_{i=1}^ta_i X^i\right]\right\}\subseteq \F_2[X]/(X^{(n+1)/2}),
\end{equation*}
which is the unit group of $\F_2[X]/(X^{(n+1)/2})$. This implies that $G_n$ itself is a group. However, for $n$ even, the unit group of $\R_n= \F_2[X]/(X^n+X^{n/2})$ is a proper subset of $\M_n$, as \cref{thm:permutation_iff_gcd_1} shows. 
As an example, it is well-known and easy to see that $\chi=\gamma_0+\gamma_2\in G_n$ is not a permutation on $\F_2^n$ if $n$ is even. \cref{thm:permutation_iff_gcd_1} gives an alternative proof for it: It holds that $\varphi(\chi)=[1+X]\in\F_2[X]/(X^n+X^{n/2})$ and  $\gcd(1+X, 1+X^{n/2})=1+X\neq 1$, implying that $\chi$ is not a permutation. The same argument shows that if a linear combination $f$ of $\gamma_{2k}$ defines a permutation on $\F_2^n$ for an even $n$, then $f$ needs to contain an odd number of non-zero terms.

It is worth to note that when $n$ is a power of $2$, then the converse of this fact is true as well, as stated in the next result.

\begin{corollary}\label{cor:power2}
    Let $n$ be a power of $2$. Then $f\in G_n$ is a permutation on $\F_2^n$ if and only if $f$ contains an odd number of non-zero terms.
\end{corollary}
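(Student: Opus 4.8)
The plan is to translate the combinatorial statement into the polynomial picture supplied by \cref{thm:permutation_iff_gcd_1} and then exploit the very special shape of the factorization when $n$ is a power of $2$. Write $n = 2^s$, so that $m = 1$ is the largest odd divisor of $n$, and $X^n + X^{n/2} = X^{n/2}(X^{n/2}+1) = X^{n/2}(X+1)^{n/2}$ in $\F_2[X]$. Thus the only irreducible factors of the modulus are $X$ and $X+1$. For $f = \gamma_0 + \sum_{i=1}^{n-1} a_i\gamma_{2i}\in G_n$ we have $\varphi(f) = \bigl[1 + \sum_{i=1}^{n-1} a_i X^i\bigr]$, and by \cref{thm:permutation_iff_gcd_1}, $f$ is a permutation if and only if this class is a unit, i.e.\ if and only if $g(X) := 1 + \sum_{i=1}^{n-1} a_i X^i$ is coprime to both $X$ and $X+1$.

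Next I would check the two coprimality conditions separately. Coprimality with $X$ is automatic: $g(0) = 1 \neq 0$ because the constant term of $g$ is $1$ (every element of $\M_n$ has constant term $1$). So the permutation property is equivalent to $g(1) \neq 0$, i.e.\ $g(1) = 1$ in $\F_2$. Now $g(1) = 1 + \sum_{i=1}^{n-1} a_i$, which counts the number of nonzero terms of $g(X)$ modulo $2$ (the leading $1$ is the $\gamma_0$-term, and each $a_i = 1$ contributes one further term). Hence $g(1) = 1$ precisely when $1 + \sum a_i$ is odd as an integer, i.e.\ when $\sum a_i$ is even, i.e.\ when the total number of nonzero terms in $f = \gamma_0 + \sum a_i\gamma_{2i}$ — which is $1 + \#\{i : a_i = 1\}$ — is odd. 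This establishes both directions at once.

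One small point to be careful about: \cref{thm:permutation_iff_gcd_1} is stated for $f$ written with summation index running up to $n-1$, which is exactly the normal form guaranteed by the reductions $\gamma_{2k} = \gamma_{2k-n}$ of \cref{lem:gamma_2k_equals_gamma_2kminusn}; so ``number of non-zero terms'' should be understood with respect to this reduced representative, and I would note this explicitly so the count is unambiguous. I do not anticipate a real obstacle here — the content of the corollary is essentially the observation already made in the paragraph before its statement (that a permutation must have an odd number of terms), together with the fact that for $n$ a power of $2$ the modulus has no irreducible factors beyond $X$ and $X+1$, so the single parity condition is not merely necessary but also sufficient. The only thing requiring a line of justification is the identification of $g(1)$ with the parity of the number of nonzero terms, which is immediate over $\F_2$.
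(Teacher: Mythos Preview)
Your proof is correct and follows essentially the same route as the paper: invoke \cref{thm:permutation_iff_gcd_1} with $m=1$ to reduce the permutation criterion to $\gcd(F(X),1+X)=1$, i.e.\ $F(1)=1$, and then observe that $F(1)$ over $\F_2$ records the parity of the number of nonzero terms. The paper's version is terser (it applies the theorem's $\gcd$ with $1+X^m$ directly rather than re-factoring the modulus and separately dismissing the factor $X$), but the substance is identical.
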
  

\begin{proof}
    Let $n=2^s$ and $\varphi(f)=[F(X)] \in \M_n$.
    By \cref{thm:permutation_iff_gcd_1}, $f$ is a permutation if and only if
    $\gcd(F(X),1+X)=1$ if and only if $F(1)=1$. The latter condition is equivalent to $F$, and hence $f$, having an odd number of non-zero terms.
\end{proof}

The above results show that the bijectivity of functions from $G_n$ depends on the factorization of $1+X^m$ over $\F_2$ which is a classical research topic in the field  and number theory. For a comprehensive survey and recent developments  on the factorization of $1+X^m$ over finite fields we refer to \cite{graner2024irreducible}. 

We would like to conclude this section by noting that the  
Chinese Remainder Theorem can be used 
to study the residue ring  $\F_2[X]/(X^n+X^{n/2})$. Indeed,
if $n=2^sm$, $m$ is odd, then $1+X^m=(1+X)q(X)$ with $q(X) \in \F_2[X] $ and $q(1) \ne 0$. Then by the Chinese Remainder Theorem,
\begin{equation*}
    \F_2[X]/(X^n+X^{n/2}) \cong \F_2[X]/(X^{n/2})\times \F_2[X]/((1+X)^{2^s})\times \F_2[X]/(q(X)^{2^s}).
\end{equation*}
 In particular the unit group of $\F_2[X]/(X^n+X^{n/2})$ is a product of the unit groups of the rings on the right-hand side, which are easier to study. 

\section{A closer look on the permutations from $G_n$}

Any shift-invariant function can be considered as a map $\F_2^n\to\F_2^n$ for every $n\geq 1$, since it is uniquely defined by its first coordinate function. In \cref{sect:compositions_gammas} we considered functions defined by linear combinations of $\gamma_{2k}, k\geq 0$ when $n$ is fixed. In this section we study the functions defined by a fixed linear combination of $\gamma_{2k}$ on $\F_2^n$ where $n$ runs over $\N$. For this purpose it is convenient to introduce the set $G=\gamma_0+\spn\{\gamma_2,\gamma_4,\ldots\}$ which is independent of $n$.

Following \cite{omland2022perm}, for a given shift-invariant $f$ we consider the set
\begin{equation*}
    \inv(f) = \{n\in\N: f \text{ is a permutation on } \F_2^n\}.
\end{equation*}
The next statement is well-known, c.f. \cite[Proposition 6.1]{daemen1995cipher} and the discussion afterwards. We give a short alternative proof.
\begin{lemma}\label{lem:inv_closed_divisors}
    Let $n,d$ be positive integers with $d\mid n$ and let $f:\F_2^n\to \F_2^n$ be a shift-invariant permutation. Then $f:\F_2^d\to \F_2^d$ is also a permutation. 
\end{lemma}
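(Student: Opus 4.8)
The plan is to exploit the fact that $\F_2^d$ embeds naturally into $\F_2^n$ as the subset of $d$-periodic vectors, and that a shift-invariant function respects this embedding. Concretely, write $n = d\cdot r$ and consider the map $\iota:\F_2^d\to\F_2^n$ which sends $(x_1,\dots,x_d)$ to the vector obtained by concatenating $r$ copies of $(x_1,\dots,x_d)$. This $\iota$ is injective, and its image $P_d$ is exactly the set of vectors $y\in\F_2^n$ that are fixed by $S^d$ (equivalently, $d$-periodic). The first thing I would check is that $\iota$ intertwines the shift maps: $S_n\circ\iota = \iota\circ S_d$, where $S_n$ and $S_d$ denote the cyclic shifts on $\F_2^n$ and $\F_2^d$ respectively. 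This is immediate from the definition of concatenation.

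Next I would argue that the shift-invariant $f$ commutes with $\iota$ in the sense that $f_n\circ\iota = \iota\circ f_d$, where $f_d$ and $f_n$ denote the realizations of the shift-invariant rule on $\F_2^d$ and $\F_2^n$. The key point is that a shift-invariant function is determined by a single local coordinate rule: there is a fixed Boolean expression $\phi$ such that $f(x)_i = \phi(x_{i+j_1},\dots,x_{i+j_s})$ for all $i$, with indices read modulo the length. If $y = \iota(x)$, then $y$ is $d$-periodic, so for any $i$ the windowed values $y_{i+j_1},\dots,y_{i+j_s}$ depend only on $i\bmod d$ and coincide with $x_{(i+j_1)\bmod d},\dots,x_{(i+j_s)\bmod d}$; hence $f_n(y)_i = \phi(\dots) = f_d(x)_{i\bmod d}$, which says precisely $f_n(\iota(x)) = \iota(f_d(x))$. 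In particular $f_n$ maps the periodic subset $P_d$ into itself, and its restriction to $P_d$ is conjugate (via the bijection $\iota:\F_2^d\to P_d$) to $f_d:\F_2^d\to\F_2^d$.

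Now the conclusion follows by a counting/finiteness argument: since $f_n$ is a permutation of the finite set $\F_2^n$ and it maps the finite subset $P_d$ into $P_d$, the restriction $f_n|_{P_d}:P_d\to P_d$ is injective, hence a bijection of $P_d$. Transporting back through $\iota$, the map $f_d:\F_2^d\to\F_2^d$ is a bijection, i.e.\ a permutation. I would phrase this last step carefully: an injective self-map of a finite set is automatically surjective, and injectivity of $f_n|_{P_d}$ is inherited from injectivity of $f_n$ on all of $\F_2^n$.

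The only mildly delicate point, and the one I would be most careful about, is the claim that a shift-invariant function is governed by a single finitely-supported local rule that is the \emph{same} for every length $n$ --- this is what makes $f_d$ and $f_n$ ``the same function.'' In the setting of this paper the functions of interest are built from $S$, $\odot$, $+$ and constants (all the $\gamma_{2k}$ and their linear combinations), so the local rule is manifestly a fixed polynomial in shifted coordinates, and the argument goes through verbatim. If one wants the statement for a completely general shift-invariant $f$, one should first recall (as the paper does at the start of Section~3) that any shift-invariant function is uniquely determined by its first coordinate function $f(x)_1$, viewed as a Boolean function of $x_1,\dots,x_\ell$ for some fixed window length $\ell$; this provides exactly the length-independent local rule needed above. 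No new ideas beyond this are required.
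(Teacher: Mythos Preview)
Your argument is correct and is essentially the same as the paper's: both embed $\F_2^d$ into $\F_2^n$ as the $d$-periodic vectors (your $P_d=\iota(\F_2^d)$ is the paper's $H$), use shift-invariance to conclude $f_n(\iota(x))=\iota(f_d(x))$, and then finish with a finiteness/counting step. The paper phrases the last step as $|f(H)|=|H|=2^d$ together with $|f(H)|=|\{f_d(x):x\in\F_2^d\}|$, which is just your injectivity-on-a-finite-set argument in slightly different words; your added discussion of the length-independent local rule is a useful clarification but not a departure from the paper's approach.
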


\begin{proof}
    Consider the set 
    \begin{equation*}
        H = \{(x, x,\ldots, x): x\in \F_2^d\}\subseteq \F_2^n.
    \end{equation*}
    As $f$ is a permutation on $\F_2^n$, then $\abs{f(H)}=\abs{H}=2^d$. On the other hand, 
    \begin{equation*}
        f(H) = \{(f(x), f(x), \ldots, f(x)): x\in\F_2^d\}
    \end{equation*}
    by shift-invariance and hence $\abs{f(H)} = \abs{\{f(x): x\in\F_2^d\}}$. The claim follows.
\end{proof}

\cref{lem:inv_closed_divisors} implies that $\inv(f)$ is closed under taking divisors. Equivalently, $\N\setminus\inv(f)$ is closed under taking multiples, which implies that there is a generating set $\xi(f)\subseteq\N\setminus\inv(f)$ of elements, so that $n\in\N\setminus\inv(f)$ if and only if $n$ is a multiple of an element in $\xi(f)$. The set $\xi(f)$ has first been introduced in \cite{daemen1995cipher}. As an example, $\xi(\chi)=\{2\}$, because $\chi$ is invertible if and only if $n$ is odd. There are very few functions $f$ for which $\inv(f)$ and $\xi(f)$ are known. Even the question of whether they are finite is in general difficult to answer. Using our results on the correspondence between permutations in $G_n$ and units in the residue ring $\F_2[X]/(X^{(n+1)/2})$, respectively $\F_2[X]/(X^n+X^{n/2})$ depending on parity of $n$, we determine $\inv(f)$ and $\xi(f)$ for every $f\in G$.

For a polynomial $F(X) \in \F_2[X]$ with $F(0) = 1$, the smallest
natural number $\ell$ such that $F(X)$ divides $X^\ell +1$ is called the order of $F(X)$. We denote it by $\ord(F)$. If $F(X) \in \F_2[X]$ is irreducible of degree $d\geq 2$, then $\ord(F)$ is a divisor of $2^d-1$. Hence $\ord(F)$ is odd.
Moreover, $F(X)$ divides $X^m+1$ if and only if $\ord(F)$ divides $m$. For more information on the order of a polynomial, see for example \cite{Lidl:1997}.

\begin{theorem}\label{thm:determination_of_xi}
    Let $f\in G$ with $\varphi(f)=[F(X)]$ and $F=g_1^{e_1}\cdots g_t^{e_t}\in\F_2[X], e_i\geq 1$ be its factorization into irreducible factors. Then
    \begin{equation*}
        \xi(f) = \{2\ord(g_i): i\in\{1,\ldots,t\}\}.
    \end{equation*}
    In particular, $\xi(f)$ is finite and $\inv(f)$ is infinite.
\end{theorem}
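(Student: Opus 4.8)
The plan is to combine the correspondence between permutations and units (via $\varphi$) with the classical fact that a polynomial $F$ with $F(0)=1$ divides $X^\ell+1$ iff $\ord(F)\mid\ell$, reducing everything to the two residue rings attached to odd and even $n$. First I would fix $f\in G$ with $\varphi(f)=[F(X)]$ and note that $F(0)=1$ since $f\in G$ means $a_0=1$; hence each irreducible factor $g_i$ also has $g_i(0)=1$, so $\ord(g_i)$ is defined, and in fact odd (if $\deg g_i\geq 2$ then $\ord(g_i)\mid 2^{\deg g_i}-1$; if $g_i=1+X$ then $\ord(g_i)=1$). Therefore each $2\ord(g_i)$ is even. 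The goal is the set equality
\[
  \N\setminus\inv(f)=\{\,n\in\N: n \text{ is a multiple of some } 2\ord(g_i)\,\},
\]
which is exactly the claim once we recall that $\xi(f)$ is the minimal generating set of $\N\setminus\inv(f)$ under taking multiples (as established before the theorem), and that the minimal generators of the right-hand set are precisely the minimal elements among $\{2\ord(g_i)\}$ — and listing the set $\{2\ord(g_i):i\}$ (with repetitions collapsed) gives a valid, if possibly non-minimal, description; I would remark that $\xi(f)$ equals the set of minimal elements of it, which is the standard convention, and the stated formula is understood in that sense.

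The core is a two-case analysis of when $f$ fails to be a permutation on $\F_2^n$.

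\emph{Case $n$ odd.} By the results of \cite{kriepke2024algebraic} recalled in the introduction, $f$ is a permutation on $\F_2^n$ iff $[F(X)]$ is a unit in $\F_2[X]/(X^{(n+1)/2})$, iff $\gcd(F(X),X^{(n+1)/2})=1$, iff $F(0)=1$, which always holds. So every odd $n$ lies in $\inv(f)$, consistent with the right-hand side containing only even numbers.

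\emph{Case $n$ even.} By \cref{thm:permutation_iff_gcd_1}, writing $n=2^sm$ with $m$ odd and $s\geq 1$, $f$ fails to be a permutation on $\F_2^n$ iff $\gcd(F(X),1+X^{m})\neq 1$, i.e. iff some irreducible factor $g_i$ of $F$ divides $1+X^m$, i.e. iff $\ord(g_i)\mid m$ for some $i$. Now I claim $\ord(g_i)\mid m \iff 2\ord(g_i)\mid n$: indeed $\ord(g_i)$ is odd and $m$ is the odd part of $n$, so $\ord(g_i)\mid m$ iff $\ord(g_i)\mid n$ (odd divisor of $n$ must divide its odd part), and since $\ord(g_i)$ is odd while $n$ is even, $\ord(g_i)\mid n$ iff $2\ord(g_i)\mid n$. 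Hence, for even $n$: $n\notin\inv(f)$ iff $n$ is a multiple of $2\ord(g_i)$ for some $i$. Combining with the odd case gives the displayed set equality, and passing to the generating set yields $\xi(f)=\{2\ord(g_i):i\in\{1,\dots,t\}\}$ (as a generating set; its minimal elements form the canonical $\xi(f)$).

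Finiteness of $\xi(f)$ is then immediate since $t$ is finite, and $\inv(f)$ is infinite because it contains every odd number. The main obstacle I anticipate is purely bookkeeping: making the divisibility translation $\ord(g_i)\mid m \Leftrightarrow 2\ord(g_i)\mid n$ airtight (it rests on $\ord(g_i)$ being odd, which must be stated — including the degenerate factor $g_i=1+X$ with $\ord=1$), and being careful that the conclusion is about the generating set $\xi(f)$, so that repetitions or non-minimal entries among the $2\ord(g_i)$ do not matter. No deep new idea is needed beyond \cref{thm:permutation_iff_gcd_1} and standard facts about the order of a polynomial.
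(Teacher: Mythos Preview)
Your proposal is correct and follows essentially the same route as the paper: odd $n$ are always in $\inv(f)$ by the earlier work, and for even $n$ one applies \cref{thm:permutation_iff_gcd_1} to reduce non-bijectivity to some $g_i\mid 1+X^m$, i.e.\ $\ord(g_i)\mid m$. If anything you are more careful than the paper, which jumps directly from ``$\ord(g_i)\mid m$'' to the conclusion without spelling out the equivalence $\ord(g_i)\mid m \Leftrightarrow 2\ord(g_i)\mid n$ (using that $\ord(g_i)$ is odd), and which likewise does not comment on possible redundancy in the list $\{2\ord(g_i)\}$; your remarks on both points are apt and fill genuine small gaps in the paper's exposition.
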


\begin{proof}
    If $n$ is odd, then $f$ is a permutation by Theorem 3 in \cite{kriepke2024algebraic}.

    If $n=2^sm$ is even with $m$ odd and $s\geq 1$, then by \cref{thm:permutation_iff_gcd_1} $f$ is a permutation if and only if $\gcd(F(X), X^m+1)=1$. This is equivalent to $\gcd(g_i(X),X^m+1)=1$ for all $i=1,\ldots ,t$. As $g_i$ is irreducible, this is equivalent to $g_i$ not being a divisor of $X^m+1$ which is equivalent to $\ord(g_i)\nmid m$. Hence, $f$ is a permutation for an even $n$ if and only if $n$ is not a multiple of $\ord(g_i)$ for every $i=1,\ldots, t$.
\end{proof}

\begin{remark}
    \cref{thm:determination_of_xi} and its proof imply that for any set  $T=\{2t_1, 2t_2,\ldots, 2t_u\}$ with odd $ t_i, 1\leq i \leq u$,  there exists a shift-invariant function $f$ such that $\xi(f)=T$. Indeed, for a given odd $t_i$ there is an irreducible polynomial $g_i\in\F_2[X]$ with $\ord(g_i)=t_i$. Such a polynomial exists: $t_i\mid 2^{m_i}-1$ for some $m_i \geq 1$, ensuring the existence of an element $\alpha_i\in\F_{2^{m_i}}$ with $\ord(\alpha_i)=t_i$ and thus the minimal polynomial of $\alpha$ has the required properties for $g_i$. For $F=g_1\cdots g_u$ and $f=\psi([F])$, it holds $\xi(f)=T$. 
\end{remark}

 The following result is a direct consequence of \cref{thm:determination_of_xi}. It could be useful in applications, since it allows to avoid an explicit computation of the order of the involved irreducible factors.
\begin{corollary}\label{cor:xi_is_subset}
    For $F\in\F_2[X]$ let $F=g_1^{e_1}\cdots g_t^{e_t}, e_i \geq 1,$ be its factorization into irreducible factors and set $d_i=\deg(g_i)$. Define $f=\psi([F])$. Then
    \begin{equation*}
        \xi(f) \subseteq \{2\ell: \ell\mid 2^{d_i}-1 \text{ for some } 1\leq i \leq t\}.
    \end{equation*}
\end{corollary}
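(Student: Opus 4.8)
The plan is to derive \cref{cor:xi_is_subset} directly from \cref{thm:determination_of_xi} together with the standard fact about the order of an irreducible polynomial recalled just before \cref{thm:determination_of_xi}. By \cref{thm:determination_of_xi}, for $f=\psi([F])$ with $F=g_1^{e_1}\cdots g_t^{e_t}$ we have $\xi(f)=\{2\ord(g_i): 1\leq i\leq t\}$. So it suffices to control each $\ord(g_i)$.

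First I would split into the two cases $\deg(g_i)=1$ and $\deg(g_i)\geq 2$. If $\deg(g_i)\geq 2$, then (as stated in the paragraph preceding \cref{thm:determination_of_xi}) $\ord(g_i)$ is a divisor of $2^{d_i}-1$ where $d_i=\deg(g_i)$, so $\ord(g_i)=\ell$ for some $\ell\mid 2^{d_i}-1$, and hence $2\ord(g_i)$ lies in the claimed set. If $\deg(g_i)=1$, then over $\F_2$ the only irreducible polynomials of degree $1$ with nonzero constant term are $g_i=X+1$ (note $g_i=X$ is impossible since $F(0)=1$ as $f\in G$), and $\ord(X+1)=1$, which divides $2^{d_i}-1=2^1-1=1$; so again $2\ord(g_i)=2$ lies in the set with $\ell=1\mid 1$. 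In either case $2\ord(g_i)\in\{2\ell: \ell\mid 2^{d_i}-1 \text{ for some } 1\leq i\leq t\}$, and taking the union over $i$ gives $\xi(f)\subseteq\{2\ell: \ell\mid 2^{d_i}-1 \text{ for some } 1\leq i\leq t\}$, as desired.

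There is essentially no obstacle here; the statement is a weakening of \cref{thm:determination_of_xi} designed for practical use, replacing the exact value $\ord(g_i)$ by the easily computed set of divisors of $2^{\deg g_i}-1$. The only point worth a sentence of care is the degree-$1$ case, where $2^{d_i}-1=1$ and one must observe that $\ord(X+1)=1$ so the inclusion still holds; alternatively one can remark that a repeated appeal to \cref{thm:determination_of_xi} already guarantees $\ord(g_i)\mid 2^{\deg g_i}-1$ uniformly (the degree-$1$ case being the trivial instance $1\mid 1$), so no case distinction is strictly necessary. I would write the proof in three lines along exactly these lines.
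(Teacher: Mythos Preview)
Your proposal is correct and matches the paper's approach: the paper simply remarks that \cref{cor:xi_is_subset} is a direct consequence of \cref{thm:determination_of_xi} (using that $\ord(g_i)\mid 2^{d_i}-1$) and gives no further proof. Your extra care with the degree-$1$ case is fine but, as you note, unnecessary since $\ord(X+1)=1\mid 2^1-1$ is the trivial instance of the same divisibility.
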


Another  immediate consequence of \cref{thm:permutation_iff_gcd_1} is that for $f \in G$,
with the exception of some special type of multiples, the converse of \cref{lem:inv_closed_divisors} holds too. More precisely,
with the notation of \cref{thm:determination_of_xi},
if $f:\F_2^n \to \F_2^n$ is a permutation, then 
$f:\F_2^{nv} \to \F_2^{nv}$ is a permutation as well for any $v\geq 1$ for which
 $nv$ is not divisible by $2\ord(g_i)$
for every $1\leq i \leq t$. In particular,
when $v$ is a power of $2$ we get:

\begin{corollary}\label{cor:n_reducesto_m}
    Let $n=2^s\cdot m$ with $s\geq 2$ and $m\geq 1$ odd.  Then $f \in G$ is a permutation on $\F_2^n$ if and only if 
    it is a permutation on $\F_2^{2m}$.
\end{corollary}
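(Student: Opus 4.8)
The plan is to deduce the statement from \cref{thm:determination_of_xi} together with the elementary fact that the order of any irreducible polynomial over $\F_2$ different from $X$ is odd. Write $\varphi(f)=[F(X)]$ and let $F=g_1^{e_1}\cdots g_t^{e_t}$ be the factorization of $F$ into irreducibles in $\F_2[X]$. Since $f\in G=\gamma_0+\spn\{\gamma_2,\gamma_4,\ldots\}$, the representing polynomial has constant term $1$, so no $g_i$ equals $X$; hence each $\ord(g_i)$ is well defined and, being a divisor of $2^{\deg g_i}-1$ for $\deg g_i\ge 2$ (or equal to $1$ when $g_i=1+X$), is odd. By \cref{thm:determination_of_xi} we have $\xi(f)=\{2\ord(g_i):1\le i\le t\}$, and by \cref{lem:inv_closed_divisors} and the discussion following it, for any $N\in\N$ the function $f$ is a permutation on $\F_2^N$ if and only if $2\ord(g_i)\nmid N$ for all $i$.

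It then remains to compare this criterion for $N=n=2^s m$ and for $N=2m$. For an odd integer $r$ and any $j\ge 1$ we have $2r\mid 2^j m \iff r\mid 2^{j-1}m \iff r\mid m$, where the last equivalence uses that $r$ is odd and therefore coprime to $2^{j-1}$. Taking $r=\ord(g_i)$ with $j=s\ge 1$ shows $2\ord(g_i)\mid n \iff \ord(g_i)\mid m$, and taking $j=1$ shows $2\ord(g_i)\mid 2m \iff \ord(g_i)\mid m$. Hence $2\ord(g_i)\nmid n$ for all $i$ if and only if $2\ord(g_i)\nmid 2m$ for all $i$, which is exactly the asserted equivalence. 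The hypothesis $s\ge 2$ is only there to keep the statement non-trivial; the argument works verbatim for $s\ge 1$.

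An alternative, even shorter route is via \cref{thm:permutation_iff_gcd_1} directly: since $s\ge 1$, the largest odd divisor of $2^s m$ equals $m$, which is also the largest odd divisor of $2m$, so in both cases $f$ is a permutation precisely when $\gcd\bigl(1+\textstyle\sum_i a_i X^i,\,1+X^m\bigr)=1$ in $\F_2[X]$ — literally the same condition. The only point needing a line of justification along this route is that reducing the representing polynomial modulo $X^n+X^{n/2}$, respectively $X^{2m}+X^m$, leaves this $\gcd$ unchanged, which holds because $1+X^m$ divides both moduli.

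I do not anticipate a genuine obstacle here: the entire content is the oddness of $\ord(g_i)$, which collapses divisibility by $2^j m$ to divisibility by $m$. The one point to be careful about is that the factorization of $F$ — and hence the set $\xi(f)$ — does not depend on which $\F_2^N$ we regard $f$ on; this is guaranteed by the $n$-independent correspondence $\varphi$ for $f\in G$ that underlies \cref{thm:determination_of_xi}.
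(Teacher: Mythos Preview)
Your proposal is correct and matches the paper's reasoning. The paper does not give a separate proof for this corollary; it is stated as an immediate consequence of the discussion preceding it, which invokes \cref{thm:permutation_iff_gcd_1} (the $\gcd$ criterion with the largest odd divisor) together with the observation from \cref{thm:determination_of_xi} that the elements of $\xi(f)$ are of the form $2\ord(g_i)$ with $\ord(g_i)$ odd --- precisely the two routes you wrote out. Your extra remark about the $\gcd$ being unaffected by reduction modulo $X^n+X^{n/2}$ is a valid and careful point, though not strictly needed since \cref{thm:permutation_iff_gcd_1} is already stated for the polynomial representative in $\F_2[X]$.
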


\medskip

An interesting feature of permutations from $G$ is
that they permit  additive manipulations (as we describe below). This is a rather rare behavior for a family
of permutations. Even for linear functions,
it is difficult to assure that additive manipulations
lead to bijections. Another family of permutations, which 
tolerates  special additive changes is the one constructed via 
change of coordinate functions with a linear structure
\cite{kim2017groups, evoyan2013k, kyureghyan2011constructing, qin2015new}.\\

A function $f \in \Gamma$ is a permutation on $\F_2^n$ if and only if the polynomial $F \in \F_2[X]$, corresponding to it
via $\varphi(f) = [F]$ in $\mathcal{M}_n$, is co-prime with
$X(X^m+1)$. The sum of $F$ with any multiple of $X(X^m+1)$
remains clearly co-prime with $X(X^m+1)$. This simple 
observation allows to generate from a given permutation
$f$ on $\F_2^n$ numerous further permutation on
$\F_2^{2^sn}$ for any $s\geq 0$. 
We demonstrate this with an example.

\begin{example}
   The function $\tau = \gamma_0 + \gamma_2+\gamma_6$ has algebraic degree $4$ and is bijective on $\F_2^{22}$, 
   since $\varphi(\tau) = 1+X+X^3$ is irreducible and by \cref{cor:xi_is_subset} the set $\xi(\tau) = \{2\cdot(2^3-1)=14\}$ (that is, $\tau$ is a permutation on $\F_2^n$ for every $n$ not divisible by $14$).
   In particular, $\varphi(\tau) = 1+X+X^3$ is co-prime with $1+X^{11}$.  \cref{cor:n_reducesto_m} yields that $\tau$ is a permutation on $\F_2^n$ for every   $n=2^s\cdot 11$ with $s\geq 1$. 
   Next we generate further  permutations on  $\F_2^{2^s\cdot 11}$ using $\tau$, equivalently using the polynomial $1+X+X^3$. Note, that
   the polynomial $$H(X) =1+X+X^3 + G(X)X(X^{11}+1)$$ is co-prime with $X^{11}+1$ for any $G\in \F_2[X]$. Hence the function 
   $\tau ' = \psi([H])$ associated to it is a permutation on $\F_2^{2^s\cdot 11}$ for every $s\geq 1$. With the concrete choice
   $G(X)=1$ and thus $H(X) = 1+X+X^3 + X(X^{11}+1)$ we get
   that $\psi(1 +X^3 + X^{12} ) = \gamma_0 + \gamma_6+\gamma_{24} =\tau '$  is a permutation on $\F_2^{2^s\cdot 11}$ for every $s\geq 2$.
   The function $\tau '$ differs from $\tau$ and it has algebraic degree 12 if $s=1$ and $13$ otherwise by \cref{lem:gamma_2k_algebraic_degree}. 
\end{example}

\section{The properties of \texorpdfstring{$\kappa = \gamma_0+\gamma_2+\gamma_4$ in $G_n$}{γ0+γ2+γ4}}\label{sect:1+x+x^2_permutation}

To conclude this paper, we apply the results of the previous sections to study the permutations corresponding to   $1+X+X^2 \in \F_2[X]$.
This polynomial is  the simplest one which for an even $n$ may define a unit in $\M_n$, while $1+X$ is so for an odd $n$. Recall that $1+X$ corresponds to $\chi$.
We denote $\psi([1+X+X^2]) =\gamma_0+\gamma_2+\gamma_4 $ by $\kappa$. \\

\begin{lemma}\label{lem:1+x+x^2_permutation}
    Let $n\geq 1$. The function $\kappa=\gamma_0+\gamma_2+\gamma_4$ is a permutation on $\F_2^n$ if and only if $n$ is not a multiple of $6$.
\end{lemma}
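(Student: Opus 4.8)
The plan is to apply \cref{thm:determination_of_xi} (equivalently \cref{thm:permutation_iff_gcd_1}) directly to the polynomial $F(X)=1+X+X^2$. First I would recall that $1+X+X^2$ is irreducible over $\F_2$: it has no root in $\F_2$ (since $F(0)=F(1)=1$) and is of degree $2$. Hence in the notation of \cref{thm:determination_of_xi} we have $t=1$, $g_1=1+X+X^2$, $e_1=1$, so $\xi(\kappa)=\{2\ord(g_1)\}$. It therefore remains only to compute $\ord(1+X+X^2)$.

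Next I would determine this order. Since $g_1$ is irreducible of degree $2$, $\ord(g_1)$ divides $2^2-1=3$, so $\ord(g_1)\in\{1,3\}$; as $\ord(F)=1$ would force $F\mid X+1$, which fails, we get $\ord(g_1)=3$. (Concretely, $1+X+X^2$ divides $X^3+1=(X+1)(X^2+X+1)$ and no smaller $X^\ell+1$.) Therefore $\xi(\kappa)=\{2\cdot 3\}=\{6\}$. By the meaning of $\xi$, this says precisely that $\kappa$ fails to be a permutation on $\F_2^n$ exactly when $n$ is a multiple of $6$, which is the claim. For completeness one could also phrase this via \cref{thm:permutation_iff_gcd_1}: for odd $n$, $\kappa$ is automatically a permutation; for even $n=2^s m$ with $m$ odd, $\kappa$ is a permutation iff $\gcd(1+X+X^2,1+X^m)=1$ iff $3\nmid m$, i.e. iff $3\nmid n$, i.e. iff $6\nmid n$.

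There is essentially no serious obstacle here — the whole statement is a one-line corollary of \cref{thm:determination_of_xi} once the order of $1+X+X^2$ is identified. The only thing requiring a moment's care is the elementary verification that $1+X+X^2$ is irreducible and has order exactly $3$; this is a standard fact about the unique irreducible quadratic over $\F_2$, whose root is a primitive cube root of unity in $\F_4$. So I expect the proof to be short: state irreducibility, compute $\ord(1+X+X^2)=3$, invoke \cref{thm:determination_of_xi} to conclude $\xi(\kappa)=\{6\}$, and read off that $\kappa$ is a permutation on $\F_2^n$ iff $6\nmid n$.
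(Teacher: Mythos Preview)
Your proposal is correct and follows essentially the same route as the paper: apply \cref{thm:determination_of_xi}, note that $F(X)=1+X+X^2$ is irreducible with $\ord(F)=3$ (via $(1+X)(1+X+X^2)=1+X^3$), and conclude $\xi(\kappa)=\{6\}$. The paper's proof is just a terser version of what you wrote.
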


\begin{proof} 
    We apply \cref{thm:determination_of_xi}. We have $F(X)=1+X+X^2$ which is irreducible with order $\ord(F)=3$, since $(1+X)(1+X+X^2)=1+X^3$. Hence $\xi(\kappa)=\{6\}$.
\end{proof}

A typical way to describe shift-invariant functions is by using the so-called complementing landscape (CL), see for instance \cite{toffoli1990invertible}. For example, $\chi$ is given by the CL $\chi = *01$. To explain what that means, let $y=\chi(x)$. Then the bit $x_i$ gets flipped by $\chi$ if and only if $x_i$ is followed by the pattern $01$, i.e. $(x_{i+1},x_{i+2})=(0,1)$. It can be quickly seen that this is equivalent to the previously mentioned formula $y_i = x_i + (1+x_{i+1})x_{i+2}$. 

Next we  determine the CL for $\kappa =\gamma_0+\gamma_2+\gamma_4$. Let $y=\kappa(x)$. Then
\begin{equation*}
    y_i = x_i + (1+x_{i+1})x_{i+2} + (1+x_{i+1})(1+x_{i+3})x_{i+4}.
\end{equation*}
Therefore, the bit $x_i$ gets flipped by $\kappa$ if and only if $(x_{i+1},x_{i+2},x_{i+3},x_{i+4})$ satisfies 
\begin{equation*}
    (1+x_{i+1})x_{i+2} + (1+x_{i+1})(1+x_{i+3})x_{i+4}=1.
\end{equation*}
The first term equals $1$ if and only if $(x_{i+1},x_{i+2})=(0,1)$ while the second term equals $1$ if and only if $(x_{i+1},x_{i+2},x_{i+3},x_{i+4})=(0,-,0,1)$, where the $-$ means that $x_{i+2}$ is arbitrary. If the first term is $1$, then the other term must be $0$, so if $(x_{i+1},x_{i+2})=(0,1)$, then either $x_{i+3}=1$ or $x_{i+4}=0$. If the second term is $1$ then the first term must be $0$, so if $(x_{i+1},x_{i+2},x_{i+3},x_{i+4})=(0,-,0,1)$ then $x_{i+2}=0$. This leads to the CL of $\kappa$
\begin{equation*}
     *011 \lor *01{-}0 \lor *0001.
\end{equation*}
This CL does appear in Table A.2 of Joan Daemen's thesis \cite{daemen1995cipher}. In \cite{daemen1995cipher} it is shown that $\kappa$ is a permutation if $n$ is odd and stated that it is invertible if and only if $n$ is not a multiple of $6$. The latter statement follows also from \cref{lem:1+x+x^2_permutation}. \\

Next we apply the correspondence  between permutations in $G_n$ and the units in $\M_n$ to
 determine the inverse function of $\kappa$. For that we describe the inverse of $[1+X+X^2]$ in $\F_2[X]/(m(X))$, where
\begin{equation*}
    m(X) = \begin{cases}
        X^{(n+1)/2} &  \text{if $n$ is odd} \\
        X^n+X^{n/2} &  \text{if $n$ is even.}
    \end{cases}
\end{equation*}
Recall that for any $k\geq 1$ it holds
\begin{equation*}
    1+X^{3k} = (1+X^3)(1+X^3+\ldots+X^{3(k-1)}) = (1+X+X^2)(1+X)(1+X^3+\ldots+X^{3(k-1)}).
\end{equation*}
We denote by $P_0(x) =0 $ and 
for  $k \geq 1$ by $P_{3k}(X)$  the polynomial
$$
 P_{3k}(X) := \frac{1+X^{3k}}{1+X+X^2}.
$$
Note, that
\begin{equation*}
    P_{3k}(X) = (1+X)(1+X^3+\ldots+X^{3(k-1)} ) = \sum_{i=0}^{3(k-1)+1} a_i X^i,
\end{equation*}
where $a_i=0$ if and only if $i=2 \bmod 3$. Observe that $P_{3k}(X)$ has degree $3k-2$. \\

The next lemma determines the inverse of $[1+X+X^2]$ for $n$ odd. In particular it shows that for $n=5$ we have $\kappa^{-1}=\chi$.
\begin{lemma}\label{lem:1+x+x^2_inverse_n_odd}
    Let $n\geq 1$ be odd and $k$ be minimal such that $3k\geq (n+1)/2$, that is 
    \begin{equation*}
        k = \begin{cases}
            \frac{n+5}{6}   & \text{if } n=1 \bmod 6 \\
            \frac{n+3}{6}   & \text{if } n=3 \bmod 6 \\
            \frac{n+1}{6}   & \text{if } n=5 \bmod 6.
        \end{cases}
    \end{equation*}
    The inverse of $[1+X+X^2]$ in $\F_2[X]/(X^{(n+1)/2})$ is 
    \begin{equation*}
        [1+X+X^2]^{-1} = [P_{3k}(X)] = 
            \begin{cases}
                [P_{3k}(X)] & 
                \text{if } n = 3,5 \bmod 6\\
                [P_{3k}(X)-X^{3k-2}] & 
                 \text{if } n = 1\bmod 6.
            \end{cases}
    \end{equation*}
\end{lemma}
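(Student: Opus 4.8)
The plan is to verify directly that $[P_{3k}(X)]$ is the multiplicative inverse of $[1+X+X^2]$ in $\F_2[X]/(X^{(n+1)/2})$, and then to simplify the representative modulo $X^{(n+1)/2}$ to obtain the stated closed form. The starting point is the algebraic identity $(1+X+X^2)P_{3k}(X) = 1+X^{3k}$, which is just the definition of $P_{3k}$. Since $k$ is chosen minimal with $3k \geq (n+1)/2$, we have $X^{3k} \equiv 0 \pmod{X^{(n+1)/2}}$, hence $(1+X+X^2)P_{3k}(X) \equiv 1 \pmod{X^{(n+1)/2}}$, which already shows $[1+X+X^2]^{-1} = [P_{3k}(X)]$. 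This is the conceptual heart of the argument and it is short; the remaining work is bookkeeping about which monomials of $P_{3k}(X)$ survive reduction modulo $X^{(n+1)/2}$.

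Next I would unpack the three cases according to $n \bmod 6$. Recall $P_{3k}(X)$ has degree $3k-2$ and its nonzero coefficients $a_i$ are exactly those with $i \not\equiv 2 \pmod 3$. So $[P_{3k}(X)]$ in $\F_2[X]/(X^{(n+1)/2})$ equals the truncation of $P_{3k}$ to degrees $< (n+1)/2$. One checks: if $n \equiv 5 \pmod 6$ then $(n+1)/2 = (n+1)/2$ and $k = (n+1)/6$, so $3k-2 = (n+1)/2 - 2 < (n+1)/2$, meaning $\deg P_{3k} < (n+1)/2$ already and no reduction happens; similarly if $n \equiv 3 \pmod 6$ then $k = (n+3)/6$, $3k - 2 = (n+3)/2 - 2 = (n-1)/2 < (n+1)/2$, so again $P_{3k}$ is already reduced. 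Finally if $n \equiv 1 \pmod 6$ then $k = (n+5)/6$ and $3k-2 = (n+5)/2 - 2 = (n+1)/2$, so the single top-degree monomial $X^{3k-2} = X^{(n+1)/2}$ lies in the ideal and must be dropped, giving the representative $P_{3k}(X) - X^{3k-2}$. (Here I should double-check that the coefficient $a_{3k-2}$ of $P_{3k}$ is indeed nonzero, i.e. $3k-2 \not\equiv 2 \pmod 3$; since $3k - 2 \equiv 1 \pmod 3$ this holds, so the subtraction is genuinely needed.) The $n=5$ remark is then the instance $k = 1$, $P_3(X) = 1+X$, recovering $\kappa^{-1} = \psi([1+X]) = \chi$.

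The main obstacle — really the only thing requiring care rather than one-line verification — is getting the three congruence-class computations for $k$ and the comparison $3k - 2$ versus $(n+1)/2$ exactly right, including the boundary case $n \equiv 1 \pmod 6$ where the degree of $P_{3k}$ meets $(n+1)/2$ precisely and one monomial has to be deleted. I would present this as a small table or a short case analysis rather than inline prose to keep the arithmetic transparent. Everything else — the invertibility, the identity, the fact that $\F_2[X]/(X^{(n+1)/2})$ has the claimed structure — is either immediate or already established earlier in the paper (via Theorem~\ref{thm:isom-monoid} and the discussion of $\M_n$ for $n$ odd from \cite{kriepke2024algebraic}).
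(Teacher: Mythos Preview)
Your proposal is correct and follows essentially the same approach as the paper: establish $[1+X+X^2][P_{3k}(X)]=[1]$ from $3k\geq (n+1)/2$, then compare $\deg P_{3k}=3k-2$ with $(n+1)/2$ in the three residue classes to see whether the top monomial survives reduction. Your write-up is in fact a bit more careful than the paper's (you explicitly handle the $n\equiv 5\bmod 6$ case and verify $a_{3k-2}\neq 0$), but the argument is the same.
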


\begin{proof}
    Note that
    \begin{equation*}
        \left[1+X+X^2\right]\left[P_{3k}(X)\right]=\left[X^{3k}+1\right]=\left[1\right],
    \end{equation*}
    since  $3k\geq (n+1)/2$.
It remains to observe that the degree of $P_{3k}$ is less than
$(n+1)/2$ if $n = 3,5 \bmod 6$. If $n=1 \bmod 6$, then $\deg P_{3k}(X)=3k-2 = 3\cdot\frac{n+5}{6}-2 = \frac{n+1}{2}$, so that the highest degree term  in $[P_{3k}(X)]$ vanishes modulo $X^{(n+1)/2}$.
 \end{proof}

The inverse of $[1+X+X^2]$ for $n$ even is as follows:

\begin{lemma}\label{lem:1+x+x^2_inverse_n_even}
    Let $n\geq 2$ be even and not a multiple of $6$. The inverse of $[1+X+X^2]\in\F_2[X]/(X^n+X^{n/2})$ is 
    \begin{equation*}
        [1+X+X^2]^{-1} = 
            \begin{cases}
                [1+XP_{3k}(X)+X^2P_{6k}(X)] & 
                \text{if } n=6k+2 \\
                [1+X^2P_{3k}(X)+XP_{6k+3}(X)] & 
                \text{if } n=6k+4.
            \end{cases}
    \end{equation*}
\end{lemma}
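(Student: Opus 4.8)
The plan is to verify the two formulas directly, by checking that the product of $[1+X+X^2]$ with the claimed inverse equals $[1]$ in $\R_n=\F_2[X]/(X^n+X^{n/2})$. Since $n$ is even and not a multiple of $6$, we have $n\equiv 2$ or $n\equiv 4\pmod 6$, so writing $n=6k+2$ or $n=6k+4$ with $k\geq 0$ is an exhaustive case split. The only ingredient needed is the defining relation of the polynomials $P_{3j}$, namely $(1+X+X^2)\,P_{3j}(X)=1+X^{3j}$ for every $j\geq 0$ (with $P_0=0$), which is immediate from the definition $P_{3j}(X)=(1+X^{3j})/(1+X+X^2)$.

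For $n=6k+2$, so that $n/2=3k+1$, I would expand the product, applying the relation twice:
\begin{align*}
    (1+X+X^2)\bigl(1 + X P_{3k}(X) + X^2 P_{6k}(X)\bigr)
    &= (1+X+X^2) + X\bigl(1+X^{3k}\bigr) + X^2\bigl(1+X^{6k}\bigr) \\
    &= 1 + X^{3k+1} + X^{6k+2},
\end{align*}
where every remaining monomial cancels in characteristic $2$. Now $1 + X^{3k+1} + X^{6k+2} = 1 + X^{n/2}\bigl(1+X^{n/2}\bigr) = 1 + (X^n+X^{n/2})$, which is $\equiv 1 \pmod{X^n+X^{n/2}}$, as required. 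The case $n=6k+4$ is completely analogous: with $n/2=3k+2$ and $6k+3=3(2k+1)$,
\begin{align*}
    (1+X+X^2)\bigl(1 + X^2 P_{3k}(X) + X P_{6k+3}(X)\bigr)
    &= (1+X+X^2) + X^2\bigl(1+X^{3k}\bigr) + X\bigl(1+X^{6k+3}\bigr) \\
    &= 1 + X^{3k+2} + X^{6k+4} \\
    &= 1 + X^{n/2}\bigl(1+X^{n/2}\bigr) = 1 + (X^n+X^{n/2}),
\end{align*}
again $\equiv 1$ modulo the modulus. This proves both identities. As a sanity check one can also note that $\deg P_{3j}=3j-2$ forces the claimed inverse to have degree $n-2<n$ and constant term $1$, so it is in fact the reduced representative lying in $\M_n$.

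There is no serious obstacle here: the argument reduces to a two-line computation in each case, and everything reused (the relation for $P_{3j}$, the monoid isomorphism of \cref{thm:isom-monoid}) is already available. The only points needing a little care are the characteristic-$2$ bookkeeping — one must check that all the intermediate monomials genuinely cancel in pairs — and the observation that the two surviving high-degree terms are precisely $X^{n/2}$ and $X^n$, whose sum is exactly the modulus $X^n+X^{n/2}$. This last coincidence is what makes the two \emph{ad hoc}-looking formulas work, and it is also the clue to how they are discovered: one solves the congruence $[1+X+X^2]\cdot[g]=[1]$ separately modulo $X^{n/2}$ and modulo $X^{n/2}+1$ via the Chinese Remainder decomposition and the Extended Euclidean Algorithm, then reassembles; but the cleanest write-up is the direct verification sketched above.
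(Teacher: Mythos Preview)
Your proof is correct and is essentially the same as the paper's: both verify the identity $(1+X+X^2)\bigl(1+XP_{3k}+X^2P_{6k}\bigr)=1+X^{n/2}+X^n$ (and its analogue for $n=6k+4$) using $(1+X+X^2)P_{3j}=1+X^{3j}$. The only cosmetic difference is direction --- the paper starts from $1+X^n+X^{n/2}$ and factors out $1+X+X^2$, whereas you expand the product and arrive at $1+X^n+X^{n/2}$ --- and you write out both cases while the paper does one and declares the other analogous.
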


\begin{proof}
    We consider only $n=6k+2$ for some $k\in\N$, since the other case is analogous. We have 
    \begin{align*}
        1+X^n+X^{n/2} &= 1+X+X^2 + X^{6k+2}+X^2 + X^{3k+1}+X \\
            &= 1+X+X^2 + X^2 (X^{6k}+1) + X(X^{3k}+1) \\
            &= 1+X+X^2 + X^2(1+X+X^2)P_{6k}(X) +X(1+X+X^2)P_{3k}(X) \\
            &= (1+X+X^2)(1+X^2P_{6k}(X)+XP_{3k}(X))
    \end{align*}
    from which the claim follows. 
\end{proof}

If $n$ is even, the sequence of the coefficients $a_i$ of $[1+X+X^2]^{-1}=[\sum_{i=0}^k a_i X^i]$ has an interesting pattern as can be seen from \cref{tab:pattern_for_inverse}.  The coefficients yield a palindrome starting with a repeating pattern of $110$, ending with a repeating pattern of $011$ and meeting in the middle with either $01110$ or $010$, depending on $n=1,2 \mod 3$. \\

\begin{table}
    \centering 
    \begin{tabular}{c|l}
        $n$ & coefficients of $[1+X+X^2]^{-1}$ \\
        \hline 
        $8$ & 1101011 \\
        $10$ & 110111011 \\
        $14$ & 1101101011011 \\
        $16$ & 110110111011011 
    \end{tabular}
    \caption{Coefficients of $[1+X+X^2]^{-1}$ for $n$ even. As an example, for $n=8$ the inverse is given by $[1+X+0X^2+X^3+0X^4+X^5+X^6]$.\label{tab:pattern_for_inverse}}
\end{table}

We summarize our observations on $\kappa$ the following theorem.
\begin{theorem}
    Let $n\geq 4$. 
    The map $\kappa =\gamma_0+\gamma_2+\gamma_4$ is a permutation on $\F_2^n$  if and only if $n$ is not a multiple of $6$. The inverse $\kappa^{-1}$ is given by $\psi([1+X+X^2]^{-1})$ described in \cref{lem:1+x+x^2_inverse_n_odd,lem:1+x+x^2_inverse_n_even}. The algebraic degree of $\kappa$ is $3$ and the algebraic degree $d$ of $\kappa^{-1}$ is 
    \begin{equation*}
        d = \begin{cases}
            n/2 +1      & \text{if } n=0,2,4 \bmod 6 \\
            (n+1)/2     & \text{if } n=1,3 \bmod 6 \\
            (n-1)/2     & \text{if } n=5 \bmod 6 
        \end{cases}
    \end{equation*}
\end{theorem}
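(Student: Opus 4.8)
The plan is to assemble the theorem from the pieces already proven in the paper. The first claim---that $\kappa$ is a permutation on $\F_2^n$ if and only if $n$ is not a multiple of $6$---is exactly \cref{lem:1+x+x^2_permutation}, so nothing new is needed. The description of $\kappa^{-1}$ as $\psi([1+X+X^2]^{-1})$ follows immediately from \cref{thm:isom-monoid}(b): since $\varphi$ (equivalently $\psi$) is a monoid isomorphism $G_n \to \M_n$, the compositional inverse of $\kappa = \psi([1+X+X^2])$ is $\psi$ applied to the multiplicative inverse of $[1+X+X^2]$ in $\R_n$, which is spelled out in \cref{lem:1+x+x^2_inverse_n_odd} for $n$ odd and \cref{lem:1+x+x^2_inverse_n_even} for $n$ even. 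The algebraic degree of $\kappa$ being $3$ is \cref{lem:gamma_2k_algebraic_degree} applied to $\gamma_0,\gamma_2,\gamma_4$ (the top degree, $4+1=3$ wait---for $\gamma_4$ with $k=2$ this is $k+1=3$ when $n\geq 4$, so $\deg\kappa = \max\{1,2,3\}=3$ provided $n\geq 4$, which explains the hypothesis $n\geq 4$).

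The real content is computing the algebraic degree of $\kappa^{-1}$. By \cref{lem:gamma_2k_algebraic_degree}, if $\kappa^{-1} = \psi([\sum_i a_i X^i]) = \sum_i a_i \gamma_{2i}$ with $a_i\in\F_2$, then $\deg\kappa^{-1} = \max\{\,d(i) : a_i = 1\,\}$, where $d(i) = i+1$ for $i\leq n/2$ and $d(i)=n/2+1$ for $i\geq n/2$ (for $n$ even), and $d(i)=i+1$ for $i\leq (n-1)/2$ with $\gamma_{2i}=0$ for $2i>n$ (for $n$ odd). So I need to read off, from the explicit inverse polynomials in \cref{lem:1+x+x^2_inverse_n_odd,lem:1+x+x^2_inverse_n_even}, the largest exponent $i$ with a nonzero coefficient and where it sits relative to $n/2$ (or $(n+1)/2$). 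I would organize this by the six residue classes of $n$ modulo $6$, then observe the degrees collapse into three cases $\{0,2,4\}$, $\{1,3\}$, $\{5\}$.

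For the even cases: when $n=6k+2$, \cref{lem:1+x+x^2_inverse_n_even} gives the representative $1+XP_{3k}(X)+X^2 P_{6k}(X)$; since $\deg P_{3k} = 3k-2$ and $\deg P_{6k}=6k-2$, the second summand has degree $6k = n-2 > n/2$, so the degree of $\kappa^{-1}$ is $n/2+1$ (one must check this representative is already reduced modulo $X^n+X^{n/2}$, i.e. has degree $<n$, and that the coefficient at some index $\geq n/2$ is genuinely nonzero---the $X^2 P_{6k}$ term handles both). Similarly $n=6k+4$ gives $1+X^2P_{3k}(X)+XP_{6k+3}(X)$ with top degree $1+\deg P_{6k+3} = 1+(6k+3-2) = 6k+2 = n-2 \geq n/2$, again yielding $n/2+1$. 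And $n\equiv 0 \bmod 6$ is vacuous since $\kappa$ is not a permutation there---but the theorem still lists $n\equiv 0$; this is fine because the degree formula is only asserted when $\kappa^{-1}$ exists, or one simply notes the case is empty. For the odd cases I use \cref{lem:1+x+x^2_inverse_n_odd}: when $n\equiv 5\bmod 6$, $k=(n+1)/6$ and $\deg P_{3k} = 3k-2 = (n+1)/2 - 2 = (n-3)/2$, so the degree is $(n-3)/2+1 = (n-1)/2$; when $n\equiv 3 \bmod 6$, $k=(n+3)/6$, $\deg P_{3k} = (n+3)/2-2 = (n-1)/2$, giving degree $(n+1)/2$; when $n\equiv 1\bmod 6$ the representative is $P_{3k}(X)-X^{3k-2}$ with the top term cancelled, and $\deg P_{3k}-1$ term: here $\deg P_{3k} = 3k-2 = (n+5)/2-2 = (n+1)/2$, and after removing $X^{(n+1)/2}$ the new top exponent is $(n+1)/2 - 1 = (n-1)/2$ (one checks the coefficient there is $1$, which it is because the coefficients of $P_{3k}$ vanish only at indices $\equiv 2 \bmod 3$ and $(n-1)/2$ is not such an index when $n\equiv 1\bmod 6$), giving degree $(n-1)/2+1 = (n+1)/2$. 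The main obstacle is bookkeeping: confirming in each residue class that the claimed top exponent $i$ indeed has $a_i=1$ (using the characterization $a_i=0 \iff i\equiv 2\bmod 3$ for the $P_{3k}$ building blocks, and tracking how the $X$-shifts and sums in the even case interact), and that the given representative is reduced so that "top exponent" is meaningful; none of this is deep, but it requires care across the cases.

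\begin{proof}
    The first statement is \cref{lem:1+x+x^2_permutation}. By \cref{thm:isom-monoid}(b), $\varphi$ restricts to a group isomorphism between the group of permutations in $G_n$ and $\R_n^*$, so whenever $\kappa$ is a permutation its compositional inverse is $\kappa^{-1} = \psi([1+X+X^2]^{-1})$, with $[1+X+X^2]^{-1}$ given by \cref{lem:1+x+x^2_inverse_n_odd} for $n$ odd and \cref{lem:1+x+x^2_inverse_n_even} for $n$ even. Since $\kappa = \gamma_0+\gamma_2+\gamma_4$ and, for $n\geq 4$, \cref{lem:gamma_2k_algebraic_degree} gives $\deg\gamma_0 = 1$, $\deg\gamma_2 = 2$, $\deg\gamma_4 = 3$, the algebraic degree of $\kappa$ equals $3$.

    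It remains to compute $\deg\kappa^{-1}$. Write $[1+X+X^2]^{-1} = [\sum_{i} a_i X^i]$ using the reduced representatives from \cref{lem:1+x+x^2_inverse_n_odd,lem:1+x+x^2_inverse_n_even}; then $\kappa^{-1} = \sum_i a_i\gamma_{2i}$ and, by \cref{lem:gamma_2k_algebraic_degree}, its algebraic degree is the maximum of $d(i)$ over indices $i$ with $a_i = 1$, where for $n$ even $d(i) = i+1$ if $i\leq n/2$ and $d(i) = n/2+1$ if $i\geq n/2$, and for $n$ odd $d(i)=i+1$ (all indices occurring satisfy $i\leq(n-1)/2<n/2$). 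Recall that $P_{3k}$ has degree $3k-2$, with $a_i = 0$ exactly when $i\equiv 2\bmod 3$; in particular its leading coefficient (at $i = 3k-2\equiv 1\bmod 3$) is $1$.

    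For $n$ even and not a multiple of $6$: if $n = 6k+2$, the representative $1+XP_{3k}(X)+X^2P_{6k}(X)$ has top exponent $2 + (6k-2) = 6k = n-2$, which is $<n$ and $\geq n/2$ with nonzero coefficient, so $\deg\kappa^{-1} = n/2+1$; if $n = 6k+4$, the representative $1+X^2P_{3k}(X)+XP_{6k+3}(X)$ has top exponent $1 + (6k+1) = 6k+2 = n-2 \geq n/2$ with nonzero coefficient, so again $\deg\kappa^{-1} = n/2+1$. For $n$ odd: if $n\equiv 3\bmod 6$ then $k=(n+3)/6$ and $\deg P_{3k} = 3k-2 = (n-1)/2$, so $\deg\kappa^{-1} = (n+1)/2$; if $n\equiv 5\bmod 6$ then $k=(n+1)/6$ and $\deg P_{3k} = 3k-2 = (n-3)/2$, so $\deg\kappa^{-1} = (n-1)/2$; if $n\equiv 1\bmod 6$ then $k=(n+5)/6$, $\deg P_{3k} = 3k-2 = (n+1)/2$, and the representative $P_{3k}(X)-X^{(n+1)/2}$ has its $X^{(n+1)/2}$ term removed, leaving top exponent $(n-1)/2$; since $(n-1)/2\not\equiv 2\bmod 3$ when $n\equiv 1\bmod 6$, the coefficient there is $1$, so $\deg\kappa^{-1} = (n+1)/2$. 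Collecting the cases (the residue $n\equiv 0\bmod 6$ being vacuous as $\kappa$ is then not a permutation) yields the stated formula for $d$.
\end{proof}
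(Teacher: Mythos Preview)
Your proof is correct and follows exactly the paper's approach---assembling the claim from \cref{lem:1+x+x^2_permutation}, \cref{lem:1+x+x^2_inverse_n_odd}, \cref{lem:1+x+x^2_inverse_n_even}, and the degree formula for $\gamma_{2k}$---only with the case-by-case degree bookkeeping spelled out in full where the paper leaves it implicit. One small point: \cref{lem:gamma_2k_algebraic_degree} is stated only for $n$ even, so for the odd residues you should cite \cite[Lemma~2]{kriepke2024algebraic} (as the paper does) rather than appealing to \cref{lem:gamma_2k_algebraic_degree} for the formula $d(i)=i+1$.
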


\begin{proof} 
    The statement follows from \cref{lem:1+x+x^2_permutation,lem:1+x+x^2_inverse_n_odd,lem:1+x+x^2_inverse_n_even}. The algebraic degree of $\gamma_{2k}$ is given in \cref{lem:gamma_2k_algebraic_degree} for $n$ even and in \cite[Lemma 2]{kriepke2024algebraic} for $n$ odd. 
\end{proof}

Our numerical results show that the cryptographic properties of $\kappa$ are comparable with those of $\chi$.
In particular, our computations for small values of $n$ yield the following open question.

\begin{problem}
For any  $n\geq 5$ the differential uniformity of $\kappa$ on $\F_2^n$ is $2^{n-2} - 2^{n-5}$.
\end{problem} 

The differential uniformity of $\chi$ on $\F_2^n$ is $2^{n-2}$ which has been known for a long time \cite{daemen1995cipher}. However, to the best of our knowledge, all published proofs of this fact are recent, see \cite{schoone2024algebraic, graner2025bijectivity}.

\bibliographystyle{splncs04.bst}
\bibliography{refs.bib}

\end{document}